\documentclass{article}
\usepackage{amssymb,pstricks}
\usepackage{latexsym}
\usepackage{amsmath}
\usepackage{amsthm}
\topmargin 0 pt \textheight 46\baselineskip \advance\textheight by
\topskip \setlength{\parindent}{0pt} \setlength{\parskip}{5pt plus
2pt minus 1pt} \setlength{\textwidth}{155mm}
\setlength{\oddsidemargin}{5.6mm}
\setlength{\evensidemargin}{5.6mm}

\theoremstyle{remark}
\theoremstyle{plain}
\newtheorem{theorem}{Theorem}[section]
\newtheorem{lemma}[theorem]{Lemma}

\newtheorem{proposition}[theorem]{Proposition}

\newtheorem{corollary}[theorem]{Corollary}

\textwidth 164 mm
\textheight 230 mm
\oddsidemargin 2mm \evensidemargin -1mm \topmargin -4mm

\begin{document}
\title{Avoidance of vincular patterns by flattened derangements}
\author{Toufik Mansour\\
\small Department of Mathematics, University of Haifa, 3498838 Haifa, Israel\\[-0.8ex]
\small\texttt{tmansour@univ.haifa.ac.il}\\[1.8ex]
Mark Shattuck\\
\small Department of Mathematics, University of Tennessee, 37996 Knoxville, TN\\[-0.8ex]
\small\texttt{mark.shattuck2@gmail.com}\\[1.8ex]
}
\date{\small }
\maketitle

\begin{abstract}
In this paper, we consider the problem of avoiding a single vincular pattern of length three by derangements in the flattened sense and find explicit formulas for the generating functions enumerating members of each corresponding avoidance class according to the number of cycles.  We make frequent use of the kernel method in solving the functional equations that arise which are satisfied by these (ordinary) generating functions.  In the case of avoiding 23-1, which is equivalent to 32-1 in the flattened sense, it is more convenient to consider the exponential generating function instead due to the form of the recurrence.  This leads to an explicit expression for the distribution of the number of cycles in terms of Stirling numbers of the second kind and the determinant of a certain tridiagonal matrix.  Finally, the cases of 3-12 and 3-21 are perhaps the most difficult of all, and here we make use of a pair of auxiliary statistics in order to find a system of recurrences that enumerate each avoidance class.
\end{abstract}

\noindent{\em Keywords:} pattern avoidance, flattened permutation, vincular pattern, derangement.

\noindent 2010 {\em Mathematics Subject Classification}:  05A15, 05A05.

\section{Introduction}

Let $\rho=\rho_1\cdots \rho_n$ and $\tau=\tau_1\cdots\tau_m$ be permutations of $[n]=\{1,\ldots,n\}$ and $[m]$, where $1 \leq m \leq n$.  Then $\rho$ is said to \emph{contain} an occurrence of $\tau$ if there exists a subsequence of $\rho$ that is order-isomorphic to $\tau$ and is said to \emph{avoid} $\tau$ otherwise.  In this context, $\tau$ is often referred to as a \emph{pattern}.  Suppose dashes are inserted between some of the letters of $\tau$ such that $\tau=\tau^{(1)}\text{-}\tau^{(2)}\text{-}\cdots\text{-}\tau^{(k)}$ for some $k\geq 1$, where $\tau^{(i)}$ for each $i \in [k]$ represents a sequence of consecutive letters of $\tau$ none of which are separated by dashes.  Then $\rho$ is said to contain $\tau$, as a \emph{vincular}, or \emph{dashed}, pattern if $\rho$ contains a subsequence $s$ that is isomorphic to $\tau$ such that any pair of letters of $s$ corresponding to adjacent entries within some $\tau^{(i)}$ must be adjacent in $\rho$.  If no such subsequence $s$ exists that is isomorphic to $\tau$ and meeting the adjacency requirements, then $\rho$ avoids $\tau$ as a vincular pattern.  If $a_i=|\tau^{(i)}|$ for $1 \leq i \leq k$, then the vincular pattern $\tau=\tau^{(1)}\text{-}\cdots\text{-}\tau^{(k)}$ is said to be of \emph{type} $(a_1,\ldots,a_k)$. For example, the permutation $\rho=4315762 \in \mathcal{S}_7$ contains an occurrence of the vincular pattern 23-1, as witnessed by the subsequence 572.  However, $\rho$ avoids the pattern 3-12, though it contains a couple of subsequences that are isomorphic to 312.  A vincular pattern for which $a_i=1$ for all $i$ is referred to as \emph{classical}, meaning that there is no adjacency requirement for letters within an occurrence of the pattern.  Hence, in the preceding example, $\rho$ contains two occurrences of the classical pattern 3-1-2.  We refer the reader to the text by Kitaev \cite[Chapter~7]{Kit} for a full discussion of vincular and other types of patterns.

Suppose $\pi \in \mathcal{S}_n$ is expressed in standard cycle form, meaning that its cycles are written from left to right in ascending order of their first elements, with the first element the smallest in each cycle.  The \emph{flattening} of $\pi$, denoted by $\text{flat}(\pi)$, is then obtained by erasing all of the parentheses enclosing the cycles of $\pi$ and considering the resulting permutation in one-line notation.  For example, if $\pi \in \mathcal{S}_8$ has disjoint cycles $(784)$, $(62)$ and $(135)$, then its standard cycle form is $(135)(26)(478)$ and $\text{flat}(\pi)=13526478$.  Flattened permutations were apparently first considered by Carlitz \cite{Car} in his definition of a certain kind of inversion statistic on $\mathcal{S}_n$, which was further studied in \cite{Shat}.  Later, Callan \cite{C} introduced the concept of a flattened partition of $[n]$, where the idea is basically the same except now brackets enclosing the various blocks are erased so as to obtain the flattened form.  He considered the avoidance problem on flattened partitions and found the cardinality of each avoidance class corresponding to a single classical pattern of length three.  Further variants of the notion of flattening have been put forth on other discrete structures, such as Catalan words \cite{BHR}, Stirling permutations \cite{BEF} and parking functions \cite{EHM}.

In analogy with the definition given in \cite{C} for set partitions, we will say that $\pi \in \mathcal{S}_n$ contains or avoids a pattern $\tau$ \emph{in the flattened sense} if $\pi'=\text{flat}(\pi)$ contains or avoids $\tau$ in the usual sense. In this setting, $\tau$ may be any kind of pattern, i.e., classical, dashed or consecutive. For example, let $\pi=419738265 \in \mathcal{S}_9$, which has standard cycle form $(1472)(395)(68)$.  Then $\pi$ contains the vincular pattern 2-31 in the flattened sense as $\pi'=147239568$ contains an occurrence of the pattern (witnessed by the subsequence 795).  Further, it is seen that $\pi$ avoids 12-3 and contains 3-21 in the usual sense, though it avoids 3-21 and contains 12-3 in the flattened sense, with both $\pi$ and $\pi'$ containing 1-23.  In general, the avoidance or containment of a pattern by a permutation in one sense is not related to its avoidance or containment in the other.

A \emph{derangement} is a permutation containing no fixed points, i.e., each of its cycles is of length at least two.  We will denote the set of all derangements of $[n]$ by $\mathcal{D}(n)$.  Here, we consider the problem of avoidance of a vincular pattern of length three by members of $\mathcal{D}(n)$ in the flattened sense.  This extends prior work concerning pattern avoidance in the flattened sense as well as related statistics on such structures as permutations \cite{MSh,MSW}, set partitions \cite{C}, Catalan words \cite{BHR}, Stirling permutations \cite{BEF} and parking functions \cite{EHM}.  Our results also extend prior work dealing with the avoidance of classical patterns by derangements in both the usual \cite{MR,RSZ} and the flattened \cite{MSder} senses. Finally, the avoidance, along with the distribution, of vincular patterns of type (2,1) in the flattened sense on the set of all permutations of length $n$ was studied in \cite{MSW2}.

Given a pattern $\tau$, let $\mathcal{D}_\tau(n)$ denote the subset of $\mathcal{D}(n)$ whose members avoid $\tau$ in the flattened sense and let $d_\tau(n)=|\mathcal{D}_\tau(n)|$.  Let $d_\tau(n;y)$ be the distribution on $\mathcal{D}_\tau(n)$ for the statistic tracking the number of cycles (marked by the indeterminate $y$); that is,
$$d_\tau(n;y)=\sum_{\pi\in\mathcal{D}_\tau(n)}y^{\mu(\pi)}, \qquad n \geq 2,$$
with $d_\tau(1;y)=0$, where $\mu(\pi)$ denotes the number of (disjoint) cycles of $\pi$.  Note $d_\tau(n;1)=d_\tau(n)$, by the definitions.  We seek an explicit formula for $d_\tau(n;y)$ and/or its generating function in cases when $\tau$ is a vincular pattern of length three.

The organization of this paper is as follows.  In the next section, we consider the avoidance of a pattern of type (2,1) by derangements in the flattened sense.  The case 13-2 may be dealt with quickly. For three other patterns, we refine the avoidance class in question by considering the second or third letter in the flattened form, which enables one to write recurrences enumerating members of each class according to the number of cycles.  These recurrences lead to functional equations satisfied by the corresponding ordinary generating functions, for which we make use of the \emph{kernel} method (see, e.g., \cite{HM}) to ascertain an explicit solution.  See subsections \ref{s31-2} and \ref{s21-3}, where this strategy is applied to the patterns 31-2, 21-3 and 12-3.  On the other hand, to enumerate members of $\mathcal{D}_{23\text{-}1}(n)$, we find it more convenient to consider the exponential instead of the ordinary generating function due to the form of the recurrence in Lemma \ref{23-1lem1}.  This leads to a second-order linear differential equation with variable coefficients, which, though it does not appear to possess a simple closed-form solution, can be used to deduce an explicit formula for $d_{23\text{-}1}(n;y)$ in terms of Stirling numbers of the second kind and some relatively simple determinants, as seen in Proposition \ref{23-1prop}.

In the third section, we provide a comparable treatment for patterns of type (1,2).  Note that the cases 1-23 and 1-32 may be dealt with quickly, with the patterns 2-31 and 2-13 seen to be logically equivalent to the corresponding classical pattern of length three whose letters appear in the same order, which were considered in \cite{MSder}.  This leaves the patterns 3-12 and 3-21, which are treated in subsections \ref{s3-12} and \ref{s3-21}.  We make use of a pair of auxiliary statistics which track the first and last letters of the final cycle within a derangement, expressed in standard cycle form, to aid in writing a system of recurrences enumerating the members or $\mathcal{D}_{3\text{-}12}(n)$ or $\mathcal{D}_{3\text{-}21}(n)$.  In the case of 3-21, we must also consider an extra array which enumerates only those members whose final cycle has length at least three.  These recurrences then lead to a functional equation for the corresponding generating function in the case 3-12 and to a system of functional equations in the case 3-21; see Lemmas \ref{3-12lem2} and \ref{3-21lem2} below. Though they apparently cannot be solved explicitly, these functional equations do lead to recurrences for the generating functions which enumerate the members of either $\mathcal{D}_{3\text{-}12}(n)$ or $\mathcal{D}_{3\text{-}21}(n)$ having a fixed number $m$ of cycles for all $m \geq 1$.  Our results concerning the avoidance of patterns of type (2,1) or (1,2) by flattened derangements are summarized below in Table \ref{tab1}.

\begin{table}[htp]
\begin{center}
\begin{tabular}{l|l|l}
$\tau$&$\{d_{\tau}(n)\}_{n=2}^{10}$&Reference\\\hline\hline
$12\text{-}3$&1, 1, 3, 8, 27, 103, 436, 2025, 10207& Corollary \ref{12-3cor}\\\hline
$13\text{-}2$, $1\text{-}32$&1, 1, 2, 3, 5, 8, 13, 21, 34&Proposition \ref{prop13-2}\\\hline
$21\text{-}3$&1, 2, 7, 25, 101, 447, 2152, 11170, 62086&Theorem \ref{21-3th1}\\\hline
$23\text{-}1$, $32\text{-}1$&1, 2, 8, 32, 151, 784, 4467, 27568, 182820&Theorem \ref{23-1thm}\\\hline
$31\text{-}2$&1, 2, 7, 23, 80, 283, 1018, 3705, 13611&Theorem \ref{31-2thm}\\\hline
$1\text{-}23$&1, 1, 1, 1, 1, 1, 1, 1, 1&Trivial\\\hline
$2\text{-}13$&1, 2, 7, 23, 80, 283, 1018, 3705, 13611&\cite[Theorem~2.1]{MSder}\\\hline
$2\text{-}31$&1, 2, 8, 30, 124, 530, 2341, 10584, 48761&\cite[Theorem~2.4]{MSder}\\\hline
$3\text{-}12$&1, 2, 7, 25, 101, 444, 2116, 10849, 59518& Theorem \ref{3-12th1}\\\hline
$3\text{-}21$&1, 2, 8, 31, 139, 673, 3521, 19690, 117026&Theorem \ref{3-21th1}\\\hline
\end{tabular}
\caption{Values of $d_{\tau}(n)$ for $2\leq n \leq 10$, where $\tau$ is of type (2,1) or (1,2)}\label{tab1}
\end{center}
\end{table}

\section{Avoiding a vincular pattern of type (2,1)}

Throughout this section, we will denote $\text{flat}(\pi)$ for $\pi \in \mathcal{D}(n)$ by $\pi'$.
We first consider the case of avoiding 13-2, which can be dealt with quickly.  Let $f_n(y)=\sum_{k=0}^{\lfloor n/2 \rfloor}\binom{n-k}{k}y^k$ for $n \geq0$ denote the $n$-th standard Fibonacci polynomial.  Note the $f_n(y)$ reduces when $y=1$ to the $n$-th Fibonacci number $f_n$, indexed so that $f_0=f_1=1$.

\begin{proposition}\label{prop13-2}
If $n \geq 2$, then $d_{13\text{-}2}(n;y)=yf_{n-2}(y)$.
\end{proposition}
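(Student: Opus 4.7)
The plan is to show that the 13-2-avoidance condition on $\pi' = \text{flat}(\pi)$ is extremely restrictive: in fact it forces $\pi'$ to be the identity permutation $12\cdots n$. Once this is established, the problem reduces to counting derangements whose standard cycle form produces the identity upon flattening, which is a straightforward composition count.

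First I would observe that since $\pi \in \mathcal{D}(n)$ is written in standard cycle form starting with the cycle containing $1$, we automatically have $\pi'_1 = 1$. I then claim that $\pi$ avoids 13-2 in the flattened sense if and only if $\pi' = 12\cdots n$. The ``if'' direction is immediate since an increasing sequence avoids every non-monotone pattern. For the ``only if'' direction, I would argue by induction on $k$ that $\pi'_k = k$ for all $1 \le k \le n$. The inductive step goes as follows: assuming $\pi'_1 = 1, \ldots, \pi'_k = k$ with $k < n$, the next entry satisfies $\pi'_{k+1} \ge k+1$, and if $\pi'_{k+1} > k+1$, then the value $k+1$ must appear in some later position $j \ge k+2$, producing the pattern $\pi'_k\,\pi'_{k+1}\,\pi'_j$ with $k < k+1 < \pi'_{k+1}$, an occurrence of 13-2 (the adjacency at positions $k, k+1$ is exactly what the dashed form requires). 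This contradicts avoidance, forcing $\pi'_{k+1} = k+1$.

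Next I would characterize the derangements $\pi$ whose flattening is the identity. If $\pi$ has cycles $C_1, C_2, \ldots, C_m$ in standard order, then $C_1$ occupies the first $|C_1|$ positions of $\pi'$, starts with $1$, and its letters appear in cyclic order beginning from the minimum; for these to equal $1, 2, \ldots, |C_1|$, the cycle must be $C_1 = (1, 2, \ldots, |C_1|)$. Iterating, each $C_i$ is of the form $(a, a+1, \ldots, b)$, i.e., a consecutive block, and the cycle lengths $(|C_1|, |C_2|, \ldots, |C_m|)$ form a composition of $n$. The derangement condition $|C_i| \ge 2$ then says this is a composition of $n$ into parts of size at least two, and distinct derangements correspond to distinct such compositions.

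Finally, refining by the number of cycles $m$, the number of compositions of $n$ into exactly $m$ parts all $\ge 2$ is $\binom{n-m-1}{m-1}$ (by the standard substitution $\lambda_i = \mu_i + 2$). Therefore
\[
d_{13\text{-}2}(n;y) = \sum_{m \ge 1} \binom{n-m-1}{m-1} y^m = y \sum_{k \ge 0} \binom{n-2-k}{k} y^k = y f_{n-2}(y),
\]
as desired. The only subtlety in the whole argument is setting up the induction cleanly so that the adjacency at positions $k, k+1$ is used correctly to witness a genuine vincular 13-2 occurrence; everything else is bookkeeping.
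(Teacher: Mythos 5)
Your proof is correct and follows essentially the same route as the paper: both arguments first force $\pi'=12\cdots n$ by exploiting the adjacency in 13-2 (your induction is just a more explicit version of the paper's ``second letter must be 2, then 3, and so on''), and then count the cycle decompositions. The only cosmetic difference is that you count compositions of $n$ into $m$ parts each at least $2$, while the paper counts the equivalent sets of cycle starters (subsets containing $1$, omitting $n$, with no two consecutive elements); both yield $\binom{n-m-1}{m-1}$.
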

\begin{proof}
Let $\pi \in \mathcal{D}_{13\text{-}2}(n)$, where $n\geq 3$.  Note that the second letter of $\pi'$ must be 2 in order to avoid an occurrence of 13-2.  This in turn implies 3 must be the third letter, and so on, whence $\pi'=12\cdots n$.  Suppose $\pi$ contains $k+1$ cycles for some $k \geq 0$ and let $S$ denote the set of cycle starters of $\pi$.  Note that $S$ always contains 1, never contains $n$ and contains no two consecutive elements.  Thus, there are $\binom{n-2-k}{k}$ choices for $S$, where $|S|=k+1$ for some $0 \leq k \leq \lfloor (n-2)/2 \rfloor$.  This implies
$$d_{13\text{-}2}(n;y)=\sum_{k=0}^{\lfloor\frac{n-2}{2} \rfloor}\binom{n-2-k}{k}y^{k+1}=yf_{n-2}(y),$$
as desired.
\end{proof}

\subsection{The pattern 31-2}\label{s31-2}

Let $a_n=a_n(y)$ denote the distribution $d_{31\text{-}2}(n;y)$ for $n \geq 2$. Given $i \in [2,n]$, let $a_{n,i}=a_{n,i}(y)$ be the restriction of $a_n$ to those members of $\mathcal{D}_{31\text{-}2}(n)$ whose flattened form has second letter $i$.  Note $a_n=\sum_{i=2}^na_{n,i}$ for $n\geq 2$, by the definitions, with $a_1=0$. The array $a_{n,i}$ is given recursively as follows.

\begin{lemma}\label{31-2lem1}
If $n \geq 4$, then
\begin{equation}\label{31-2lem1e1}
a_{n,i}=\delta_{i,3}\cdot ya_{n-2}+\sum_{j=i-1}^{n-1}a_{n-1,j}, \qquad 3 \leq i \leq n-1,
\end{equation}
with $a_{n,2}=a_{n-1}+ya_{n-2}$ for $n\geq 3$ and $a_{n,n}=y$ for $n\geq 2$.
\end{lemma}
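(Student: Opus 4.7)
The plan is to classify each $\pi\in\mathcal{D}_{31\text{-}2}(n)$ with $\pi'_2=i$ according to the length $\ell$ of its first cycle $C_1$. Since $\pi$ has no fixed points and its cycles are written with their minima first, $\pi'$ always begins $1,\pi(1),\ldots$, so $i=\pi(1)$ and $C_1=(1,i,c_3,\ldots,c_\ell)$ for some $\ell\geq 2$, with $\pi'=1,i,c_3,\ldots,c_\ell,d_1,d_2,\ldots$. The first observation is a local constraint at positions $2,3$: if $i\geq 3$ and $c_3<i-1$, then each value in the nonempty set $\{c_3+1,\ldots,i-1\}$, none of which appears among $1,i,c_3$, necessarily occurs somewhere to the right and thereby produces an occurrence of $31\text{-}2$. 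Hence $c_3\in\{i-1\}\cup\{i+1,i+2,\ldots,n\}$.

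For the case $\ell=2$ we have $C_1=(1,i)$, and the next cycle begins with $2$ (the smallest unused element, since $i\geq 3$). The descent $(i,2)$ then forces $i=3$ by the observation above; otherwise the letters $3,\ldots,i-1$ would contribute forbidden occurrences. When $i=3$, the tail of $\pi'$ is the flattened form of the remaining cycles on $\{2,4,5,\ldots,n\}$, and the only fresh adjacencies $(3,2)$ and $(2,\cdot)$ cannot start a $31\text{-}2$, so the standard relabeling to $[n-2]$ yields an arbitrary member of $\mathcal{D}_{31\text{-}2}(n-2)$. Since $(1,3)$ contributes one cycle, this case accounts for the $\delta_{i,3}\cdot y a_{n-2}$ summand.

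For $\ell\geq 3$, the plan is to use the map $\pi\mapsto\hat\pi$ that deletes $i$ from $C_1$, leaving the cycle $(1,c_3,\ldots,c_\ell)$, and then relabels $[n]\setminus\{i\}$ to $[n-1]$ order-preservingly. Since the shortened cycle still has length at least two, $\hat\pi\in\mathcal{D}(n-1)$ and $\mu(\hat\pi)=\mu(\pi)$, while $\hat\pi'_2=\hat c_3$ equals $c_3$ if $c_3<i$ and $c_3-1$ otherwise; combined with the constraint on $c_3$ this gives $\hat c_3\in\{i-1,i,\ldots,n-1\}$. What I expect to be the main obstacle is the verification that $31\text{-}2$ avoidance is preserved in both directions. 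In the forward direction, no $31\text{-}2$ in $\hat\pi'$ can start at position $1$ (since $\hat\pi'_1=1$), and one starting at position $j\geq 2$ would shift back to a $31\text{-}2$ in $\pi'$ at positions $j+1,j+2,l+1$. Conversely, inserting $i$ after $1$ creates fresh adjacencies only at positions $1,2$ and $2,3$; the former is an ascent, while the latter is either the vacuous descent $(i,i-1)$ or an ascent, so no new pattern is introduced. This yields a bijection onto $\{\hat\pi\in\mathcal{D}_{31\text{-}2}(n-1):\hat\pi'_2\in\{i-1,\ldots,n-1\}\}$, accounting for the $\sum_{j=i-1}^{n-1}a_{n-1,j}$ summand.

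The stated boundary equalities follow by variants of the same analysis. When $i=2$, the adjacency $(2,c_3)$ is always an ascent, so for $\ell\geq 3$ there is no restriction on $c_3\in\{3,\ldots,n\}$ and the map contributes $\sum_{j=2}^{n-1}a_{n-1,j}=a_{n-1}$, while $\ell=2$ contributes $y a_{n-2}$ as before, summing to $a_{n,2}=a_{n-1}+y a_{n-2}$. When $i=n$, iterating the local observation forces $c_3=n-1,\,c_4=n-2,\,\ldots,\,c_n=2$, so $\pi$ is the single cycle $(1,n,n-1,\ldots,2)$, giving $a_{n,n}=y$.
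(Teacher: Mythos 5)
Your proof is correct and takes essentially the same route as the paper's: both hinge on the observation that the third letter of $\pi'$ must lie in $\{i-1\}\cup[i+1,n]$ and on deleting the second letter $i$ to reduce to length $n-1$, with the transposition first cycle (forcing $i=3$) supplying the $\delta_{i,3}\cdot ya_{n-2}$ term. Organizing the cases by the length of the first cycle rather than by the value and cycle-membership of the third letter is only a cosmetic difference.
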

\begin{proof}
Let $\mathcal{A}_n=\mathcal{D}_{31\text{-}2}(n)$ and $\mathcal{A}_{n,i}$ denote the subset of $\mathcal{A}_n$ enumerated by $a_{n,i}$.  Note that $\mathcal{A}_{n,n}$ for $n \geq 2$ consists of only the single-cycle derangement $(1n(n-1)\cdots 2)$ by the ordering of cycles, whence $a_{n,n}=y$.  If $n \geq 3$, then the members of $\mathcal{A}_{n,2}$ are synonymous with members of $\mathcal{A}_{n-2}$ or $\mathcal{A}_{n-1}$ depending on whether or not the 2-cycle $(12)$ occurs, which explains the formula for $a_{n,2}$.  So assume $n \geq 4$ and $3 \leq i \leq n-1$.  If $i>3$, then $\pi \in \mathcal{A}_{n,i}$ implies $\pi'$ must start $1,i,j$ for some $j\in [i+1,n]\cup\{i-1\}$, as $j<i-1$ would produce an occurrence of 31-2 in which the role of 2 is played by the letter $i-1$. Note $i>3$ implies $j>2$, and hence $j$ must also belong to the first cycle of $\pi$, since $j$ cannot start a cycle as $2$ lies to the right of $j$ in $\pi'$. We may then delete $i$ from $\pi$ and reduce each letter in $[i+1,n]$ by one to obtain a member of $\mathcal{A}_{n-1,j-1}$ if $j \in [i+1,n]$ or a member of $\mathcal{A}_{n-1,i-1}$ if $j=i-1$. Considering all possible $j$ implies \eqref{31-2lem1e1} if $4 \leq i \leq n-1$.

If $i=3$, then similar reasoning applies concerning the contribution of $\sum_{j=2}^{n-1}a_{n-1,j}$ towards $a_{n,3}$, where here we assume in cases when $j=2$ that $2$ lies in the first cycle of $\pi$.  It is also possible when $\pi'$ starts $1,3,2$ that the 2 starts a cycle.  In these cases, we get a contribution of $ya_{n-2}$ towards the weight, where the factor of $y$ accounts for the 2-cycle $(13)$, which may be ignored as it is extraneous concerning the avoidance of 31-2.  Combining this case with the prior one implies \eqref{31-2lem1e1} when $i=3$ and completes the proof.
\end{proof}

Define the generating function $A(x,v)=A(x,y,v)$ by
$$A(x,v)=\sum_{n\geq2}\left(\sum_{i=2}^na_{n,i}v^{i-2}\right)x^n.$$
Then $A(x,v)$ satisfies the following functional equation.

\begin{lemma}\label{31-2lem2}
We have
\begin{equation}\label{31-2lem2e1}
(1-v+xv^2)A(x,v)=x^2y(1-v)+x(1+xy-xyv^2)A(x,1).
\end{equation}
\end{lemma}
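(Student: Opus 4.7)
The plan is to translate the recurrences in Lemma \ref{31-2lem1} into a recurrence for the partial sums $A_n(v):=\sum_{i=2}^n a_{n,i}v^{i-2}$ (so that $A(x,v)=\sum_{n\ge 2}A_n(v)x^n$), and then to pass to generating functions by multiplying by $x^n$ and summing over $n\geq 4$, using the values of $A_2(v)$ and $A_3(v)$ to account for the boundary.

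First, for $n\geq 4$ I would split $A_n(v)$ into its $i=2$, $3\leq i\leq n-1$, and $i=n$ pieces. Substituting the three formulas from Lemma \ref{31-2lem1} and using $a_{n-1}=A_{n-1}(1)$, the only nontrivial term is the double sum
\[
S_n(v):=\sum_{i=3}^{n-1}v^{i-2}\sum_{j=i-1}^{n-1}a_{n-1,j}.
\]
Swapping the order of summation and evaluating the inner geometric series $\sum_{i=3}^{\min(j+1,n-1)}v^{i-2}=\frac{v-v^j}{1-v}$ for $j\le n-2$ (and $\frac{v-v^{n-2}}{1-v}$ for $j=n-1$) expresses $S_n(v)$ in terms of $A_{n-1}(1)$, $A_{n-1}(v)$, and a $yv^{n-2}$ boundary term that cancels against the $a_{n,n}v^{n-2}=yv^{n-2}$ contribution. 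After collecting everything, one arrives at the clean recurrence
\[
(1-v)A_n(v)+v^2A_{n-1}(v)=A_{n-1}(1)+ya_{n-2}(1-v^2),\qquad n\geq 4.
\]

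Next, I would multiply by $x^n$ and sum over $n\ge 4$. Each of the four resulting series telescopes to an expression in $A(x,v)$ and $A(x,1)$, after subtracting the initial terms $A_2(v)x^2=yx^2$ and $A_3(v)x^3=(y+yv)x^3$ (using $a_{3,2}=y$ via the formula $a_{n,2}=a_{n-1}+ya_{n-2}$ with $a_1=0$). A pleasant simplification, namely $(1-v)(y+yv)=y(1-v^2)$, makes the $x^3$ contributions from $A_3(v)$ and from the $v^2A_{n-1}(v)$ shift combine with the $-yx^3$ coming from $A_{n-1}(1)$ to cancel entirely. What remains is
\[
\big[(1-v)+v^2x\big]A(x,v)=x^2y(1-v)+\big[x+xy(1-v^2)x\big]A(x,1),
\]
which is exactly \eqref{31-2lem2e1}.

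The main obstacle is the careful bookkeeping: correctly handling the three separate cases ($i=2$, $3\le i\le n-1$, $i=n$) of Lemma \ref{31-2lem1}, the singular behavior of the inner sum in $S_n(v)$ at $j=n-1$, and the edge values $A_2(v),A_3(v)$. None of these steps is deep, but tracking the cancellation of the $yv^{n-2}$ and $yx^3$ terms is where an error is most likely. Once the recurrence for $A_n(v)$ has been derived in the displayed form above, converting it into the functional equation is a routine exercise in shifting indices in power series.
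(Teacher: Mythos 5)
Your proof is correct and takes essentially the same route as the paper: both translate Lemma \ref{31-2lem1} into the functional equation by multiplying by $x^nv^{i-2}$ and summing, with the double sum handled via the geometric series $\sum_i v^{i-2}=\frac{v-v^j}{1-v}$. The only difference is organizational — you first isolate the clean intermediate recurrence $(1-v)A_n(v)+v^2A_{n-1}(v)=A_{n-1}(1)+ya_{n-2}(1-v^2)$ by swapping the order of summation on the full inner sum, whereas the paper splits off the $j=i-1$ term and sums everything in one pass — and your intermediate identities (the cancellation of the $yv^{n-2}$ boundary term, the values $A_2(v)=y$, $A_3(v)=y+yv$, and the $yx^3$ cancellation) all check out.
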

\begin{proof}
First note
\begin{align*}
&\sum_{n\geq4}x^n\sum_{i=3}^{n-1}a_{n-1,i-1}v^{i-2}=\sum_{n\geq3}x^{n+1}\sum_{i=2}^{n-1}a_{n,i}v^{i-1}=xv\left(A(x,v)-\sum_{n\geq2}a_{n,n}x^nv^{n-2}\right)\\
&=xv\left(A(x,v)-\frac{x^2y}{1-xv}\right)
\end{align*}
and
\begin{align*}
&\sum_{n\geq4}\sum_{i=3}^{n-1}\sum_{j=i}^{n-1}a_{n-1,j}x^nv^{i-2}=\sum_{n\geq4}\sum_{j=3}^{n-1}a_{n-1,j}x^n\sum_{i=3}^jv^{i-2}=\sum_{n\geq3}\sum_{j=3}^{n}a_{n,j}x^{n+1}\left(\frac{v-v^{j-1}}{1-v}\right)\\
&=\frac{xv}{1-v}\sum_{n\geq2}\sum_{j=2}^na_{n,j}x^n(1-v^{j-2})=\frac{xv}{1-v}\left(A(x,1)-A(x,v)\right).
\end{align*}
Multiplying both sides of \eqref{31-2lem1e1} by $x^nv^{i-2}$, summing over all $n \geq 4$ and $3 \leq i \leq n-1$, and considering the contributions from $a_{n,2}$ and $a_{n,n}$ then yields
\begin{align*}
&A(x,v)-x^2y-x^3y(1+v)\\
&=xv\left(A(x,v)-\frac{x^2y}{1-xv}\right)+\frac{xv}{1-v}\left(A(x,1)-A(x,v)\right)+yv\sum_{n\geq4}a_{n-2}x^n\\
&\quad+\sum_{n\geq4}(a_{n-1}+ya_{n-2})x^n+y\sum_{n\geq4}x^nv^{n-2}\\
&=xv\left(A(x,v)-\frac{x^2y}{1-xv}\right)+\frac{xv}{1-v}\left(A(x,1)-A(x,v)\right)+x^2yvA(x,1)\\
&\quad+x(1+xy)A(x,1)-x^3y+\frac{x^4yv^2}{1-xv}.
\end{align*}
The last equality may be rewritten as
$$\left(1+\frac{xv^2}{1-v}\right)A(x,v)=x^2y+\left(\frac{x}{1-v}+x^2y(1+v)\right)A(x,1),$$
which implies \eqref{31-2lem2e1}.
\end{proof}

Let $C=C(x)$ denote the Catalan number generating function $\sum_{n\geq0}C_nx^n=\frac{1-\sqrt{1-4x}}{2x}$.  One can find an explicit formula for $A(x,v)$ and also for the coefficients of $A(x,1)$ as follows.

\begin{theorem}\label{31-2thm}
The generating function enumerating members of $\mathcal{D}_{31\text{-}2}(n)$ for $n \geq 2$ jointly according to the number of cycles and the second letter in the flattened form (marked by $y$ and $v$, respectively) is given by
\begin{equation}\label{31-2thme1}
A(x,v)=\frac{x^2y(1-v)}{1-v+xv^2}+\frac{x^2y(1+xy-xyv^2)(C-1)}{(1-v+xv^2)(1-xy(C^2-1))}.
\end{equation}
Moreover, the number of members of $\mathcal{D}_{31\text{-}2}(n)$ with $m$ cycles, where $1 \leq m \leq \lfloor n/2 \rfloor$, is given by
$$\frac{2(m-1)}{n-m}\sum_{j=0}^{m-2}(-1)^j\binom{m-2}{j}\binom{2n-2j-3}{n-m-2}+\frac{1}{n-m}\sum_{j=0}^{m-1}(-1)^j\binom{m-1}{j}\binom{2n-2j-2}{n-m-1}.$$
\end{theorem}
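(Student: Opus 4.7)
The plan is to apply the kernel method to the functional equation \eqref{31-2lem2e1}. The kernel $1 - v + xv^2$ has a unique formal power series root $v = C$, the Catalan generating function, satisfying $xC^2 = C-1$. Setting $v = C$ annihilates the left-hand side of \eqref{31-2lem2e1}; solving for $A(x,1)$ and simplifying via $xC^2 = C-1$ gives
$$A(x,1) = \frac{xy(C-1)}{1 - xy(C^2-1)}.$$
Substituting this back into \eqref{31-2lem2e1} and dividing by the kernel then yields the stated closed form \eqref{31-2thme1} for $A(x,v)$.

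For the second assertion, I would extract $[x^n y^m]A(x,1)$ from the above expression. The identity $xy(C^2-1) = y(C-1-x)$ permits a geometric-series expansion giving $[y^m]A(x,1) = x(C-1)(C-1-x)^{m-1}$. A binomial expansion of $(C-1-x)^{m-1}$ combined with Lagrange inversion applied to $u := C-1$ (which satisfies $u = x(1+u)^2$, hence $[x^r](C-1)^s = \frac{s}{r}\binom{2r}{r-s}$) then produces the single-sum representation
$$[x^n y^m]A(x,1) = \sum_{j=0}^{m-1}(-1)^j\binom{m-1}{j}\frac{m-j}{n-1-j}\binom{2n-2-2j}{n-m-1}.$$

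The main technical step will be to recast this sum in the theorem's two-sum form. Writing $\frac{m-j}{n-1-j} = 1 - \frac{n-m-1}{n-1-j}$ together with $\frac{1}{n-1-j}\binom{2n-2-2j}{n-m-1} = \frac{2}{n-m-1}\binom{2n-3-2j}{n-m-2}$ decomposes the single sum as $T_2 - 2T_1'$, where $T_2 = \sum_j(-1)^j\binom{m-1}{j}\binom{2n-2-2j}{n-m-1}$ and $T_1' = \sum_j(-1)^j\binom{m-1}{j}\binom{2n-3-2j}{n-m-2}$. Setting $P(z) = (1+z)^{2n-2m}(z+2)^{m-1}$, routine coefficient-extraction shows that $T_2 = [z^{n-2m}]P(z)$, $T_1' = [z^{n-2m-1}]P(z)/(1+z)$, and the theorem's $\binom{m-2}{j}$-sum equals $T_1 := [z^{n-2m}](1+z)^{2n-2m+1}(z+2)^{m-2}$. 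The relation $P'(z) = 2(n-m)P/(1+z) + (m-1)P/(z+2)$, combined with $[z^{k-1}]P'(z) = k[z^k]P(z)$ and the consequence $[z^{n-2m-1}]P/(z+2) = 2T_1 - T_2$ of $(z+2)(P/(z+2)) = P$, yields the identity $(n-m-1)T_2 - 2(n-m)T_1' = 2(m-1)T_1$. This rearranges to $T_2 - 2T_1' = \frac{1}{n-m}T_2 + \frac{2(m-1)}{n-m}T_1$, which is exactly the expression claimed.
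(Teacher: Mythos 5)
Your proof is correct. The first half is exactly the paper's argument: kernel method with $v=C$, using $xC^2=C-1$ to kill the left side of \eqref{31-2lem2e1}, solving for $A(x,1)$, and substituting back. For the coefficient extraction you take a genuinely different route. The paper expands $(C-1)(C^2-1)^{m-1}$ in powers of $C$, applies $[x^a]C^b=\frac{b}{a+b}\binom{2a+b-1}{a}$, and then reaches the two-sum form by a chain of Pascal-type binomial manipulations. You instead use $x(C^2-1)=C-1-x$ to write $[y^m]A(x,1)=x(C-1)(C-1-x)^{m-1}$ and apply Lagrange inversion to $u=C-1$ (satisfying $u=x(1+u)^2$), which yields the clean single-sum formula $\sum_{j=0}^{m-1}(-1)^j\binom{m-1}{j}\frac{m-j}{n-1-j}\binom{2n-2-2j}{n-m-1}$; you then match this to the theorem's two-sum form via the logarithmic-derivative identity for $P(z)=(1+z)^{2n-2m}(z+2)^{m-1}$. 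I verified the key steps: the representations $T_2=[z^{n-2m}]P$, $T_1'=[z^{n-2m-1}]P/(1+z)$ and $T_1=[z^{n-2m}](1+z)^{2n-2m+1}(z+2)^{m-2}$ follow from $1-(1+z)^{-2}=z(z+2)/(1+z)^2$, the relation $[z^{n-2m-1}]P/(z+2)=2T_1-T_2$ is correct, and extracting $[z^{n-2m-1}]$ from $P'=2(n-m)P/(1+z)+(m-1)P/(z+2)$ does give $(n-m-1)T_2-2(n-m)T_1'=2(m-1)T_1$, which rearranges to the claimed decomposition (and the split $\frac{m-j}{n-1-j}=1-\frac{n-m-1}{n-1-j}$ together with $\frac{1}{n-1-j}\binom{2n-2-2j}{n-m-1}=\frac{2}{n-m-1}\binom{2n-3-2j}{n-m-2}$ correctly gives $T_2-2T_1'$). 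Your approach buys a simpler closed single-sum expression for the count as an intermediate product, at the cost of an extra polynomial identity needed only to recover the paper's particular presentation; the paper's approach is more elementary but requires several ad hoc binomial reductions to land on the stated form.
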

\begin{proof}
To solve \eqref{31-2lem2e1}, we apply the kernel method and let $v=C(x)$. By the fact $xC^2=C-1$, this cancels out the left-hand side of \eqref{31-2lem2e1} and solving for $A(x,1)$ gives
\begin{equation}\label{31-2thme2}
A(x,1)=\frac{xy(C-1)}{1-xy(C^2-1)}.
\end{equation}
Substituting \eqref{31-2thme2} back into \eqref{31-2lem2e1} yields \eqref{31-2thme1}.

Note that the number of members of $\mathcal{D}_{31\text{-}2}(n)$ with $m$ cycles is $[x^ny^m]A(x,1)$ for $1 \leq m \leq \lfloor n/2 \rfloor$.  By \eqref{31-2thme2}, we have
\begin{align*}
[x^ny^m]A(x,1)&=[x^ny^m]\left(\sum_{r\geq1}(C-1)(C^2-1)^{r-1}(xy)^r\right)=[x^{n-m}]\left((C-1)(C^2-1)^{m-1}\right)\\
&=[x^{n-m}]\left(\sum_{j=0}^{m-1}(-1)^j\binom{m-1}{j}C^{2m-2j-1}-\sum_{j=0}^{m-1}(-1)^j\binom{m-1}{j}C^{2m-2j-2}\right).
\end{align*}
Applying now the formula $[x^a]C^b=\frac{b}{a+b}\binom{2a+b-1}{a}$ for $a \geq 0$ and $b\geq 1$, see \cite[Equation~2.5.16]{Wilf}, we have
\begin{align*}
&[x^ny^m]A(x,1)\\
&=\sum_{j=0}^{m-1}(-1)^j\frac{2m-2j-1}{n+m-2j-1}\binom{m-1}{j}\binom{2n-2j-2}{n-m}-\sum_{j=0}^{m-2}(-1)^j\frac{2m-2j-2}{n+m-2j-2}\binom{m-1}{j}\binom{2n-2j-3}{n-m}\\
&=\sum_{j=0}^{m-1}(-1)^j\frac{2m-2j-1}{n-m}\binom{m-1}{j}\binom{2n-2j-2}{n-m-1}-\sum_{j=0}^{m-2}(-1)^j\frac{2m-2j-2}{n-m}\binom{m-1}{j}\binom{2n-2j-3}{n-m-1}\\
&=\frac{(-1)^{m-1}}{n-m}\binom{2n-2m}{n-m-1}\\
&\quad+\frac{1}{n-m}\sum_{j=0}^{m-2}(-1)^j\binom{m-1}{j}\left((2m-2j-1)\binom{2n-2j-2}{n-m-1}-(2m-2j-2)\binom{2n-2j-3}{n-m-1}\right)
\end{align*}
\begin{align*}
&=\frac{(-1)^{m-1}}{n-m}\binom{2n-2m}{n-m-1}+\frac{1}{n-m}\sum_{j=0}^{m-2}(-1)^j\binom{m-1}{j}\left((2m-2j-2)\binom{2n-2j-3}{n-m-2}+\binom{2n-2j-2}{n-m-1}\right)\\
&=\frac{(-1)^{m-1}}{n-m}\binom{2n-2m}{n-m-1}+\frac{2}{n-m}\sum_{j=0}^{m-2}(-1)^j(m-j-1)\binom{m-1}{j}\binom{2n-2j-3}{n-m-2}\\
&\quad+\frac{1}{n-m}\sum_{j=0}^{m-2}(-1)^j\binom{m-1}{j}\binom{2n-2j-2}{n-m-1}\\
&=\frac{2(m-1)}{n-m}\sum_{j=0}^{m-2}(-1)^j\binom{m-2}{j}\binom{2n-2j-3}{n-m-2}+\frac{1}{n-m}\sum_{j=0}^{m-1}(-1)^j\binom{m-1}{j}\binom{2n-2j-2}{n-m-1},
\end{align*}
as desired.
\end{proof}

\noindent{\bf Remark:} Summing the formula given in the second statement of Theorem \ref{31-2thm} over $1 \leq m \leq \lfloor n/2 \rfloor$ yields an expression for $d_{31\text{-}2}(n)$ for $n \geq 2$  in terms of binomial coefficients.  Using the fact $xC^2=C-1$, one can show
$$A(x,1)=\frac{x(C-1)}{1-x(C^2-1)}=\frac{x^2(2+x-(1+x)C)}{1-3x-4x^2-x^3},$$
and hence $d_{31\text{-1}2}(n)$ may also be expressed as a convolution involving the Catalan numbers and the entries of sequence A122600 in \cite{Sloane}, namely, the coefficients of $x^n$ in the expansion of $\frac{1}{1-3x-4x^2-x^3}$. \medskip

A permutation of $[2m]$ in which each (disjoint) cycle is of size two is known as a (perfect) \emph{matching} of $[2m]$.  By finding the coefficient of $[y^m]$ in $d_{\tau}(2m;y)$ for $m \geq 1$, equivalently the coefficient of $x^{2m}y^m$ in the corresponding generating function, one obtains the number of $\tau$-avoiding matchings of $[2m]$ in the flattened sense.  One may verify that a necessary and sufficient condition for a matching $\pi$ of $[2m]$ to avoid 31-2 is for $(\pi_{2i},\pi_{2i+1})$ to equal either $(2i,2i+1)$ or $(2i+1,2i)$ for all $i \in [m-1]$ within $\pi'=\pi_1\cdots\pi_{2m}$.  Thus, there are $2^{m-1}$ possibilities for $\pi'$, and hence the same number of 31-2 avoiding matchings of $[2m]$.  Equating this result with the $n=2m$ case of the second statement in Theorem \ref{31-2thm} yields the following somewhat curious alternating sum binomial identity, which we were unable to find in the literature.

\begin{corollary}\label{3-12cor1}
If $m \geq 0$, then
\begin{equation}\label{3-12cor1e1}
2^{m}=\frac{2m}{m+1}\sum_{j=0}^{m-1}(-1)^j\binom{m-1}{j}\binom{4m-2j+1}{m-1}+\frac{1}{m+1}\sum_{j=0}^{m}(-1)^j\binom{m}{j}\binom{4m-2j+2}{m}.
\end{equation}
\end{corollary}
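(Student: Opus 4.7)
The plan is to carry out the program already sketched in the paragraph preceding the corollary: count the 31-2 avoiding matchings of a fixed size set in two ways and equate. Concretely, I will compute $[x^{2(m+1)}y^{m+1}]A(x,1)$, which Theorem \ref{31-2thm} gives as the right-hand side of \eqref{3-12cor1e1} after substituting $n=2(m+1)$ and $m+1$ cycles and simplifying the binomial coefficients, then compare it to a direct enumeration of 31-2 avoiding matchings, yielding $2^m$.

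The first step, and the only one with any real combinatorial content, is to verify the structural claim that a matching $\pi$ of $[2n]$ avoids 31-2 in the flattened sense if and only if, writing the standard cycle form as $(a_1b_1)(a_2b_2)\cdots(a_nb_n)$ with $a_1<a_2<\cdots<a_n$ and $a_i<b_i$, one has $\{b_i,a_{i+1}\}=\{2i,2i+1\}$ for every $i\in[n-1]$ (so $\pi'_1=1$, $\pi'_{2n}=2n$, and each intermediate pair $(\pi'_{2i},\pi'_{2i+1})$ is either $(2i,2i+1)$ or $(2i+1,2i)$). The reverse direction is immediate, since under this block structure any descent $b_i>a_{i+1}$ can only be an adjacent pair of consecutive integers, leaving no room for a ``2'' to occur later. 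For the forward direction, I will argue inductively from left to right: the only candidate descents in $\pi'$ sit at the junctions $(b_i,a_{i+1})$, and if $b_i>a_{i+1}$ then every later entry must lie outside the open interval $(a_{i+1},b_i)$; since the remaining cycle starters form an increasing sequence all exceeding $a_{i+1}$, this forces the pair $\{b_i,a_{i+1}\}$ to consist of two consecutive integers immediately following whatever has been placed so far, which inductively must be $\{2i,2i+1\}$.

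The second step is purely a matter of counting: the characterization gives one independent binary choice per $i\in[n-1]$, so there are exactly $2^{n-1}$ avoiding matchings of $[2n]$. Taking $n=m+1$ produces $2^m$ matchings of $[2(m+1)]$. The third step is algebraic bookkeeping: substituting $n\mapsto 2(m+1)$ and cycle count $m\mapsto m+1$ in the closed form from Theorem \ref{31-2thm}, the prefactor $\tfrac{1}{n-m}$ becomes $\tfrac{1}{m+1}$, the inner binomials become $\binom{4m-2j+1}{m-1}$ and $\binom{4m-2j+2}{m}$, and the leading $\tfrac{2(m-1)}{n-m}$ becomes $\tfrac{2m}{m+1}$; equating the resulting expression with $2^m$ yields \eqref{3-12cor1e1}. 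The only place that requires care is the structural characterization in the first step, since the vincular adjacency in 31-2 means one must justify that no nontrivial 31-2 occurrence can be formed across nonadjacent pairs of cycles, but the fact that the cycle starters are monotone makes this essentially automatic once phrased correctly.
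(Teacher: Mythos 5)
Your proposal is correct and follows the paper's own route exactly: the paper likewise obtains the identity by equating the $2^{m-1}$ count of 31-2-avoiding matchings of $[2m]$ (via the same characterization that each pair $(\pi'_{2i},\pi'_{2i+1})$ must be $(2i,2i+1)$ or $(2i+1,2i)$) with the $n=2m$ specialization of the closed form in Theorem \ref{31-2thm}, then re-indexing $m\mapsto m+1$. The only point worth tightening in your sketch is the forward direction at an \emph{ascending} junction $b_i<a_{i+1}$, where the same observation you already use (every letter after position $2i+1$ exceeds $a_{i+1}$, being a later cycle starter or its partner) forces $\{b_i,a_{i+1}\}=\{2i,2i+1\}$ without even invoking avoidance.
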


\subsection{21-3 and 12-3}\label{s21-3}

Let $u_n=d_{21\text{-}3}(n;y)$ and let $u_{n,i}$ denote the restriction of $u_n$ to members of $\mathcal{D}_{21\text{-}3}(n)$ whose second letter in the flattened form is $i$.

\begin{lemma}\label{21-3lem1}
If $n \geq 4$, then
\begin{equation}\label{21-3lem1}
u_{n,i}=\sum_{j=i}^{n-1}u_{n-1,j}, \qquad 3 \leq i \leq n-1,
\end{equation}
with $u_{n,2}=u_{n,n}=u_{n-1}+yu_{n-2}$ for $n \geq 3$ and $u_{2,2}=y$.
\end{lemma}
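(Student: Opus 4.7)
The plan is to verify each case of the recurrence by a bijective deletion argument on the first cycle of $\pi \in \mathcal{D}_{21\text{-}3}(n)$ whose flattened form $\pi'$ has second letter $i$. Throughout, since $\pi$ is written in standard cycle form, $\pi'_1 = 1$, and since $\pi$ is a derangement the entry $i = \pi'_2$ must lie in the first cycle of $\pi$ (for if $i$ started a new cycle, then $(1)$ would be a $1$-cycle, which is forbidden). The base case $u_{2,2} = y$ is immediate, as $\mathcal{D}(2) = \{(12)\}$ consists of a single derangement with one cycle and flattened form $12$.

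For $3 \leq i \leq n-1$, the key structural observation is that $\pi'_3 > i$ must hold. Indeed, if $\pi'_3 < i$, then $\pi'_2\pi'_3$ is a descent and, since $i < n$, the letter $n$ must appear at some later position $b$, producing an occurrence of $21\text{-}3$ with adjacent entries $\pi'_2,\pi'_3$ and final entry $\pi'_b$. This also forces the first cycle to have length at least $3$, for otherwise $\pi'_3 = 2 < i$. I would then show that deleting $i$ from within the first cycle and subtracting $1$ from every letter exceeding $i$ is a bijection from the set of such $\pi$ onto $\bigsqcup_{j=i}^{n-1}\{\sigma \in \mathcal{D}_{21\text{-}3}(n-1) : \sigma'_2 = j\}$: the new second letter equals $\pi'_3 - 1 \in \{i, i+1, \ldots, n-1\}$, the number of cycles is preserved, the shift is order-preserving, and no $21\text{-}3$ occurrence in $\pi'$ can have its left endpoint at position $1$ or $2$, as the prefix $1, i$ is ascending.

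For $i = 2$, the first cycle starts $(1, 2, \ldots)$, and I would split according to whether it equals $(1,2)$ or has length at least $3$. In the former subcase, erasing the $2$-cycle $(1,2)$ and subtracting $2$ from the surviving letters gives a bijection onto $\mathcal{D}_{21\text{-}3}(n-2)$, contributing $y\,u_{n-2}$. In the latter, removing the leading $1$ from the first cycle (so that it becomes $(2, x_3, \ldots)$, still first in standard order) and then subtracting $1$ from every letter gives a bijection onto $\mathcal{D}_{21\text{-}3}(n-1)$, contributing $u_{n-1}$. For $i = n$, the maximality of $n$ rules out any $21\text{-}3$ with left endpoint at position $2$, and the analogous split applies: if the first cycle equals $(1,n)$, one erases it and subtracts $1$ from the remaining letters to contribute $y\,u_{n-2}$; if the first cycle has length at least $3$, one deletes $n$ from within the first cycle (no relabeling is needed since $n$ is maximal) to contribute $u_{n-1}$. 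In every case, the bijection preserves both the cycle count and $21\text{-}3$ avoidance, since the shift is order-preserving and the removed letter does not participate in any $21\text{-}3$ occurrence of the preimage.

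The main obstacle is the argument that $\pi'_3 > i$ is forced when $3 \leq i \leq n-1$; once this structural constraint is in place, the bijections above and the verification of the avoidance and cycle-count conditions closely parallel those in the proof of Lemma \ref{31-2lem1}.
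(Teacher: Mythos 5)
Your proposal is correct and follows essentially the same route as the paper: it establishes that $\pi'_3>i$ is forced when $3\leq i\leq n-1$ (the paper uses $i+1$ rather than $n$ as the witness playing the role of the 3, but either works), deduces that the first cycle has length at least three, and then deletes $i$ to land in $\mathcal{U}_{n-1,j}$, while the cases $i=2$ and $i=n$ are handled by the same split on whether the first cycle is a transposition. No gaps.
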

\begin{proof}
Let $\mathcal{U}_n=\mathcal{D}_{21\text{-}3}(n)$ and $\mathcal{U}_{n,i}$ be the subset of $\mathcal{U}_n$ enumerated by $u_{n,i}$ for $2 \leq i \leq n$.  Then members of $\mathcal{U}_{n,2}$ or $\mathcal{U}_{n,n}$ are synonymous with members of $\mathcal{U}_{n-2}$ or $\mathcal{U}_{n-1}$ depending on whether or not the first cycle has length two, which implies the formula for $u_{n,2}$ and $u_{n,n}$.  If $\pi \in \mathcal{U}_{n,i}$, where $3 \leq i \leq n-1$, then the third letter in $\pi'$ must be $j+1$ for some $j \in [i,n-1]$, for otherwise, there would be an occurrence of 21-3 in which the role of 3 is played by  $i+1$.  Further, the first cycle of $\pi$ must have length at least three in this case since $j+1>i>2$.  One may thus delete $i$ from $\pi$ and reduce the letters in $[i+1,n]$ by one, and the resulting derangement is seen to belong to $\mathcal{U}_{n-1,j}$ for each $j$.  Considering all possible $j$ then yields \eqref{21-3lem1} and completes the proof.
\end{proof}

Define the generating function $U(x,v)=U(x,y,v)$ by
$$U(x,v)=\sum_{n\geq2}\left(\sum_{i=2}^nu_{n,i}v^{i-2}\right)x^n.$$

\begin{theorem}\label{21-3th1}
The generating function enumerating members of $\mathcal{D}_{21\text{-}3}(n)$ for $n \geq 2$ according to the number of cycles (marked by $y$) is given by
\begin{equation}\label{21-3th1e1}
U(x,1)=x^2y\sum_{j\geq0}\frac{x^j\prod_{i=1}^j(1-ix+xy)}{\prod_{i=0}^j(1-(2i+1)x+i(i+1)x^2-x^2y)}.
\end{equation}
\end{theorem}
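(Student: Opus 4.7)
The approach mirrors the strategy used for Theorem \ref{31-2thm}: convert the recurrence of Lemma \ref{21-3lem1} into a functional equation for $U(x,v)$, apply the kernel method to obtain a one-variable relation between $U(x,1)$ and $U(x/(1-x),1)$, and then iterate.

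First, I would multiply \eqref{21-3lem1} by $x^n v^{i-2}$ and sum over $n\ge 4$ and $3\le i\le n-1$. Swapping the order of summation on the right produces the inner geometric sum $\sum_{i=3}^{j} v^{i-2}=(v-v^{j-1})/(1-v)$, which, together with the identification of partial sums of the $u_{n-1,j}$ inside $U_{n-1}(1)$ and $U_{n-1}(v)$, yields a clean term of the form $\tfrac{v}{1-v}[U(x,1)-U(x,v)]$. The boundary values satisfy $u_{n,2}=u_{n,n}=u_{n-1}+yu_{n-2}$: summing $u_{n,2}x^n$ produces multiples of $U(x,1)$, while summing $u_{n,n}v^{n-2}x^n$, via the reindexings $n\mapsto m+1$ and $n\mapsto m+2$, produces a $U(xv,1)$ term. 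After collecting terms and clearing the factor of $1-v$, one should arrive at a functional equation of the shape
\begin{equation*}
(1-v+xv)\,U(x,v)=yx^2(1-v)+x(1+xy-xyv)\,U(x,1)+\frac{x(1-v)(1+xyv)}{v}\,U(xv,1).
\end{equation*}

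Next, I would apply the kernel method: the root of $1-v+xv=0$ is $v=1/(1-x)$, and at that value $(1-v)/v=-x$. The left-hand side vanishes, and the resulting identity simplifies, after multiplication by $(1-x)/x$, to
\begin{equation*}
(1-x-x^2y)\,U(x,1)=yx^2+x(1-x+xy)\,U\!\left(\tfrac{x}{1-x},1\right).
\end{equation*}

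Finally, I would iterate this relation. Writing $x_k:=x/(1-kx)$, one has $x_{k+1}=x_k/(1-x_k)$, and a short calculation shows $1-x_k-x_k^2 y = D_k(x)/(1-kx)^2$ with $D_k(x):=(1-kx)(1-(k+1)x)-x^2y=1-(2k+1)x+k(k+1)x^2-x^2y$, and similarly $x_k(1-x_k+x_k y)=x(1-(k+1)x+xy)/(1-kx)^2$; the $(1-kx)^{-2}$ factors cancel and one obtains the uniform recursion
\begin{equation*}
U(x_k,1)=\frac{yx^2}{D_k(x)}+\frac{x(1-(k+1)x+xy)}{D_k(x)}\,U(x_{k+1},1).
\end{equation*}
Unrolling and letting $k\to\infty$ then yields the claimed sum of products in \eqref{21-3th1e1}, with convergence in the formal power series ring assured because the $j$-th summand has $x$-valuation at least $j+2$. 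The main technical obstacle is the careful extraction of the $U(xv,1)$ contribution when deriving the functional equation (the $u_{n,n}v^{n-2}$ boundary is responsible for it), together with the crucial observation that the iterated denominators factor as $(1-kx)(1-(k+1)x)-x^2y$, which is exactly what makes the telescoping collapse into the product appearing in \eqref{21-3th1e1}.
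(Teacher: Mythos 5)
Your proposal is correct and follows essentially the same route as the paper: the functional equation you derive is exactly the paper's equation \eqref{21-3th1e3} multiplied through by $1-v$, the kernel root $v=1/(1-x)$ and the resulting one-variable relation $(1-x-x^2y)U(x,1)=x^2y+x(1-x+xy)U(x/(1-x),1)$ coincide with the paper's, and the iteration with $x_k=x/(1-kx)$ and $D_k(x)=(1-kx)(1-(k+1)x)-x^2y$ is precisely the step the paper compresses into ``which may be solved by iteration.'' Your explicit verification of the telescoping denominators is a welcome elaboration but not a different argument.
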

\begin{proof}
Let $U_n(v)=\sum_{i=2}^nu_{n,i}v^{i-2}$ for $n \geq 2$ and note $u_n=U_n(1)$.  By \eqref{21-3lem1}, along with the formula for $u_{n,2}$ and $u_{n,n}$, we have
\begin{equation}\label{21-3th1e2}
U_n(v)=(U_{n-1}(1)+yU_{n-2}(1))(1+v^{n-2})+\frac{v}{1-v}(U_{n-1}(1)-U_{n-1}(v)), \qquad n \geq 4,
\end{equation}
with $U_2(v)=y$ and $U_3(v)=y+yv$.  Multiplying both sides of \eqref{21-3th1e2} by $x^n$, and summing over all $n \geq 4$, yields
\begin{align*}
&U(x,v)-x^2y-x^3y(1+v)\\
&=x(1+xy)U(x,1)-x^3y+\frac{x}{v}\left((1+xyv)U(xv,1)-(xv)^2y\right)+\frac{xv}{1-v}(U(x,1)-U(x,v)),
\end{align*}
which may be rewritten as
\begin{equation}\label{21-3th1e3}
\left(1+\frac{xv}{1-v}\right)U(x,v)=x^2y+\frac{x}{v}(1+xyv)U(xv,1)+x\left(xy+\frac{1}{1-v}\right)U(x,1).
\end{equation}
Applying now the kernel method, and letting $v=\frac{1}{1-x}$ in \eqref{21-3th1e3}, gives the functional equation
$$U(x,1)=\frac{x^2y}{1-x-x^2y}+\frac{x(1-x+xy)}{1-x-x^2y}U\left(\frac{x}{1-x},1\right),$$
which may be solved by iteration to give \eqref{21-3th1e1}.
\end{proof}

\noindent{\bf Remark:} Substituting \eqref{21-3th1e1} into \eqref{21-3th1e3} yields an explicit formula for $U(x,v)$, if desired. As a partial confirmation of \eqref{21-3th1e1}, we may use it to compute the number of 21-3 matchings of $[2m]$ for $m \geq 1$ as follows:
\begin{align*}
[x^{2m}y^m]U(x,1)&=\sum_{j=0}^{m-1}[x^{2m-2j-2}y^{m-j-1}]\left(\frac{1}{\prod_{i=0}^j(1-(2i+1)x+i(i+1)x^2-x^2y)}\right)\\
&=\sum_{j=0}^{m-1}[z^{m-j-1}]\left(\frac{1}{(1-z)^{j+1}}\right)=\sum_{j=0}^{m-1}\binom{m-1}{j}=2^{m-1}.
\end{align*}
This formula may also be realized directly by observing that in order for a matching $\pi$ of $[2m]$ to avoid 21-3, a necessary and sufficient condition is that $\pi_{2i}$ for each $i \in [m-1]$ within its flattened form $\pi'=\pi_1\cdots\pi_{2m}$ must either be the smallest or largest letter not occurring amongst $\pi_1\cdots\pi_{2i-1}$.  Then there are $2^{m-1}$ possibilities for $\pi'$, and hence also for $\pi$, as all of the letters $\pi_{2i+1}$ for $i \in [m-1]$ are uniquely determined once the choices regarding the sizes of $\pi_{2i}$ have been made. \medskip

We now seek a generating function formula enumerating the members of $\mathcal{W}_n=\mathcal{D}_{12\text{-}3}(n)$. First note that the second letter of $\pi'$ for $\pi \in \mathcal{W}_n$ where $n \geq 3$ must be $n$, for otherwise there would be an occurrence of 12-3 in $\pi'$ in which the role of the 3 is played by $n$.  This observation prompts one to consider a refinement of $\mathcal{W}_n$ as follows.  Given $n \geq 3$ and $2 \leq i \leq n-1$, let $\mathcal{W}_{n,i}$ denote the subset of $\mathcal{W}_n$ consisting of those members $\pi$ for which $\pi'$ starts $1,n,i$.  Let $w_n=d_{12\text{-}3}(n;y)$ and $w_{n,i}$ be the restriction of $w_n$ to $\mathcal{W}_{n,i}$ for $2 \leq i \leq n-1$.  For example, if $n=5$ and $i=3$, then $w_{5,3}=y^2+2y$, the enumerated derangements being $(153)(24)$, $(15324)$ and $(15342)$.

The array $w_{n,i}$ is given recursively as follows.

\begin{lemma}\label{12-3lem1}
If $n \geq 5$, then
\begin{equation}\label{12-3lem1e1}
w_{n,i}=w_{n-2}+\sum_{j=2}^{i-1}w_{n-1,j}, \qquad 3 \leq i \leq n-2,
\end{equation}
with $w_{n,2}=(1+y)w_{n-2}$, $w_{n,n-1}=w_{n-1}$ and initial conditions $w_{3,2}=w_{4,3}=y$, $w_{4,2}=y^2+y$.
\end{lemma}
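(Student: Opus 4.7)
My plan is to prove each of the three recurrences via a weight-preserving bijection involving the first cycle of $\pi\in\mathcal{W}_{n,i}$. Throughout I use the fact noted just before the lemma that the second letter of $\pi'$ must equal $n$ for $\pi\in\mathcal{W}_n$ with $n\geq 3$, so $\pi\in\mathcal{W}_{n,i}$ means precisely that $\pi'$ begins $1,n,i$.

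For $w_{n,n-1}=w_{n-1}$, I would first argue that the first cycle of $\pi\in\mathcal{W}_{n,n-1}$ cannot be $(1,n)$, for otherwise $n-1$ would have to begin a cycle of length at least two with only $n$ available that is larger than $n-1$. Hence the first cycle begins $(1,n,n-1,\ldots)$ with length at least three, and deleting $n$ yields a derangement $\sigma$ of $[n-1]$ whose first cycle begins $(1,n-1,\ldots)$. I would then check that $\pi\leftrightarrow\sigma$ is a weight-preserving bijection onto $\mathcal{W}_{n-1}$: $12\text{-}3$ avoidance transfers in both directions because the deleted letter $n$ sits at position two and is the maximum of $[n]$, so it can neither be "$1$" nor "$2$" in a $12\text{-}3$ by maximality, and it cannot be "$3$" since nothing precedes position two except position one.

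For the main identity \eqref{12-3lem1e1}, I would define $\phi:\mathcal{W}_{n,i}\to\mathcal{D}(n-1)$ that deletes $i$ from the first cycle of $\pi$ and relabels each $x>i$ to $x-1$, and then split $\mathcal{W}_{n,i}$ by the form of the first cycle into three sub-cases: (i) first cycle equal to $(1,n,i)$, so that $\phi(\pi)$ has first cycle $(1,n-1)$ followed by a cycle starting at $2$, giving $\phi(\pi)\in\mathcal{W}_{n-1,2}$; (ii) first cycle of length at least four with fourth element $c_1\in[2,i-1]$, giving $\phi(\pi)\in\mathcal{W}_{n-1,c_1}$; and (iii) first cycle of length at least four with $c_1>i$, where the adjacent ascending pair $(i,c_1)$ in $\pi'$ together with $12\text{-}3$ avoidance forces every letter at position five or later to be at most $c_1$, and since $\pi'$ is a permutation of $[n]$ with $n$ already at position two, this pins $c_1=n-1$, giving $\phi(\pi)\in\mathcal{W}_{n-1,n-2}$. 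These sub-cases show $\phi$ is a weight-preserving bijection onto $\big(\bigcup_{j=2}^{i-1}\mathcal{W}_{n-1,j}\big)\cup\mathcal{W}_{n-1,n-2}$ (the pieces disjoint since $n-2>i-1$ when $i\leq n-2$), with inverse inserting $i$ at position three of $\sigma'$ and relabeling $x\mapsto x+1$ for $x\geq i$. Substituting $w_{n-1,n-2}=w_{n-2}$, the $n\mapsto n-1$ instance of the first identity above, then yields \eqref{12-3lem1e1}.

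The formula $w_{n,2}=(1+y)w_{n-2}$ for $n\geq 4$ splits analogously at $i=2$: if the first cycle is $(1,n)$, removing this 2-cycle and relabeling $\{2,\ldots,n-1\}$ onto $[n-2]$ gives a bijection with $\mathcal{W}_{n-2}$ contributing $y\,w_{n-2}$; otherwise the first cycle has length at least four (length three is impossible since any continuation creates an ascending pair $(2,\cdot)$ followed by still larger letters in $[n-1]$) and by the same force-$c_1=n-1$ reasoning takes the form $(1,n,2,n-1,\ldots)$, whence deletion of $2$ plus relabeling gives a bijection with $\mathcal{W}_{n-1,n-2}$ contributing $w_{n-2}$. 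The initial conditions $w_{3,2}=w_{4,3}=y$ and $w_{4,2}=y^2+y$ are verified by direct enumeration of the small derangement sets. The principal technical obstacle throughout is the force-$c_1=n-1$ argument, which unifies the three recurrences; a subsidiary difficulty is the careful bookkeeping needed to confirm $12\text{-}3$ avoidance is preserved in both directions of each bijection.
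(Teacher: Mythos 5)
Your proof is correct and follows essentially the same route as the paper: both decompose $\mathcal{W}_{n,i}$ according to the fourth letter of $\pi'$, which $12\text{-}3$ avoidance forces to lie in $[2,i-1]\cup\{n-1\}$, and both use letter-deletion bijections on the first cycle. The only real difference is in the subcase where the fourth letter equals $n-1$: the paper deletes $1$ and $n$ to land directly in $\mathcal{W}_{n-2}$, whereas you delete the third letter to land in $\mathcal{W}_{n-1,n-2}$ and then invoke $w_{n-1,n-2}=w_{n-2}$; the two reductions are equivalent.
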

\begin{proof}
The initial conditions for $n=3,4$ follow from observing $$\mathcal{W}_3=\{(132)\} \quad\text{and}\quad \mathcal{W}_4=\{(14)(23),\,(1423),\,(1432)\},$$ so assume $n \geq 5$.  Note that members $\pi \in \mathcal{W}_{n,2}$ in which the transposition $(1n)$ occurs have weight $yw_{n-2}$, whereas those $\pi$ in which it doesn't occur have weight $w_{n-2}$.  To realize the latter case, note that the first cycle of such $\pi$ has length at least four, as the fourth letter in $\pi'$ is $n-1 \geq 4$, with $n-1$ not starting a cycle as 3 occurs to the right of it in $\pi'$.  Combining these two cases then implies the formula for $w_{n,2}$.  Note that members of $\mathcal{W}_{n,n-1}$ and $\mathcal{W}_{n-1}$ are synonymous, whence $w_{n,n-1}=w_{n-1}$, upon deleting $n$ from the first cycle in members of the former set, which can be done as $n \geq 5$ implies the first cycle has length at least three.

Now suppose $3 \leq i \leq n-2$ and let $\pi \in \mathcal{W}_{n,i}$.  First assume that the fourth letter in $\pi'$ is $n-1$.  Then the first cycle of $\pi$ in this case is of length at least four since 2 occurs to the right of $i$ and $n-1$ in $\pi'$. Hence, we may delete both 1 and $n$ from the first cycle of $\pi$ and treat $i$ as the new ``smallest'' letter in the resulting derangement, thereby obtaining a contribution of $w_{n-2}$ towards $w_{n,i}$.  Otherwise, the fourth letter $j$ in $\pi'$ must belong to $[2,i-1]$ in order to avoid 12-3.  In this case, we may delete the extraneous letter $i$, which does not start a cycle of $\pi$ as 2 occurs to the right of $i$ in $\pi'$, and then reduce each letter in $[i+1,n]$ by one.  This is seen to produce an arbitrary member of $\mathcal{W}_{n-1,j}$ for each $j \in [2,i-1]$.  Considering all possible $j$ then yields a contribution of $\sum_{j=2}^{i-1}w_{n-1,j}$ towards  $w_{n,i}$.  Combining with the previous case implies \eqref{12-3lem1e1} and completes the proof.
\end{proof}

Define the generating function $W(x,v)$ by
$$W(x,v)=\sum_{n\geq3}\left(\sum_{i=2}^{n-1}w_{n,i}v^{n-1-i}\right)x^n.$$
Then $W(x,1)$ is given explicitly as follows.

\begin{theorem}\label{12-3th1}
The generating function enumerating members of $\mathcal{D}_{12\text{-}3}(n)$ for $n \geq 3$ according to the number of cycles is given by
\begin{equation}\label{12-3th1e1}
W(x,1)=x^3y\sum_{j\geq0}\frac{x^j\prod_{i=0}^j(1-ix+xy)}{(1-(j+1)x)\prod_{i=0}^j(1-ix)^2}.
\end{equation}
\end{theorem}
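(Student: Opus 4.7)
The plan is to convert the recurrences of Lemma \ref{12-3lem1} into a functional equation for $W(x,v)$, apply the kernel method to obtain a self-referential relation for $W(x,1)$, and then iterate. Writing $W_n(v)=\sum_{i=2}^{n-1}w_{n,i}v^{n-1-i}$, I would substitute \eqref{12-3lem1e1} into $W_n(v)$ for $n\geq 5$, isolating the boundary terms $w_{n,2}v^{n-3}$ and $w_{n,n-1}$. After swapping the order of the double sum $\sum_{i=3}^{n-2}\sum_{j=2}^{i-1}$ and evaluating the inner geometric sums, the contribution of the $w_{n-1,j}$ terms collapses using $\sum_{j=2}^{n-3}w_{n-1,j}=w_{n-1}-w_{n-2}$ and $\sum_{j=2}^{n-3}w_{n-1,j}v^{n-2-j}=W_{n-1}(v)-w_{n-2}$, which (after multiplying by $1-v$) should yield
\begin{equation*}
(1-v)W_n(v)+vW_{n-1}(v)=w_{n-1}+vw_{n-2}+v^{n-3}w_{n-2}\bigl(y-v-vy\bigr).
\end{equation*}

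Next, multiplying by $x^n$ and summing over $n\geq 5$, the key observation is that $v^{n-3}x^n=(x^2/v)(xv)^{n-2}$, so the boundary series on the right sums to $(x^2/v)(y-v-vy)W(xv,1)$. Using the explicit values $W_3(v)=y$ and $W_4(v)=(y^2+y)v+y$ to absorb the initial terms, I expect a functional equation of the shape
\begin{equation*}
(1-v+xv)W(x,v)=F(x,v)+\bigl(x+x^2v\bigr)W(x,1)+\frac{x^2\bigl(y-v(1+y)\bigr)}{v}W(xv,1),
\end{equation*}
where $F(x,v)$ is an explicit polynomial in $x,y,v$.

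The kernel $1-v+xv$ vanishes at $v=1/(1-x)$. Substituting this value eliminates $W(x,v)$; after simplifying with the identity $x(y+2-x)+(1-x)^2=1+xy$, everything should collapse to the clean recursive relation
\begin{equation*}
W(x,1)=\frac{x^3y(1+xy)}{1-x}+x(1-x)(1+xy)\,W\!\left(\frac{x}{1-x},1\right).
\end{equation*}
I would then iterate along the orbit $x_k=x/(1-kx)$, which satisfies $x_k/(1-x_k)=x_{k+1}$, and use the telescoping identities $\prod_{i=0}^{j-1}x_i=x^j/\prod_{i=1}^{j-1}(1-ix)$, $\prod_{i=0}^{j-1}(1-x_i)=1-jx$, and $1+x_iy=(1-ix+xy)/(1-ix)$. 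Regrouping the factors yields precisely the denominator $(1-(j+1)x)\prod_{i=0}^{j}(1-ix)^2$ and numerator $x^j\prod_{i=0}^{j}(1-ix+xy)$ appearing in \eqref{12-3th1e1}.

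The main obstacle is the first step: the exponent $v^{n-1-i}$ indexes positions from the right, so each substitution and index swap must be tracked carefully, and the three separate formulas (for $i=2$, $3\leq i\leq n-2$, and $i=n-1$) have to fit together consistently with the initial values $W_3(v),W_4(v)$. Once the functional equation is in hand, the kernel substitution and the subsequent telescoping iteration are essentially mechanical, mirroring the method used for $U(x,1)$ in the proof of Theorem \ref{21-3th1}.
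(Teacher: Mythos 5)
Your proposal is correct and follows essentially the same route as the paper: the same functional equation for $W(x,v)$ (yours is the paper's equation multiplied through by $1-v$, and your intermediate relation $(1-v)W_n(v)+vW_{n-1}(v)=w_{n-1}+vw_{n-2}+v^{n-3}w_{n-2}(y-v-vy)$ is exactly equivalent to the paper's recurrence for $W_n(v)$), the same kernel root $v=1/(1-x)$ yielding the identical relation $W(x,1)=\tfrac{x^3y(1+xy)}{1-x}+x(1-x)(1+xy)W(\tfrac{x}{1-x},1)$, and the same iteration along $x_k=x/(1-kx)$. The telescoping products you list do reassemble into the stated numerator and denominator, so no gaps remain.
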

\begin{proof}
Let $W_n(v)=\sum_{i=2}^{n-1}w_{n,i}v^{n-1-i}$ for $n \geq 3$.  By Lemma \ref{12-3lem1}, we have
\begin{equation}\label{12-3th1e2}
W_n(v)=W_{n-1}(1)+yW_{n-2}(1)v^{n-3}
+W_{n-2}(1)\frac{v-v^{n-2}}{1-v}+\frac{v}{1-v}(W_{n-1}(1)-W_{n-1}(v)), \quad n \geq 5,
\end{equation}
with $W_3(v)=y$ and $W_4(v)=y+y(1+y)v$.  Multiplying both sides of \eqref{12-3th1e2} by $x^n$, and summing over all $n \geq 5$, we obtain
\begin{align*}
&W(x,v)-x^3(1+x)y-x^4y(1+y)v\\
&=x(W(x,1)-x^3y)+\frac{x^2y}{v}W(xv,1)+\frac{x^2}{1-v}(vW(x,1)-W(xv,1))+\frac{xv}{1-v}(W(x,1)-W(x,v)),
\end{align*}
which implies
\begin{equation}\label{12-3th1e3}
\left(1+\frac{xv}{1-v}\right)W(x,v)=x^3y(1+x(1+y)v)+\frac{x(1+xv)}{1-v}W(x,1)+\frac{x^2(y-v-yv)}{v(1-v)}W(xv,1).
\end{equation}
Letting $v=\frac{1}{1-x}$ in \eqref{12-3th1e3} gives
$$W(x,1)=\frac{x^3y(1+xy)}{1-x}+x(1-x)(1+xy)W\left(\frac{x}{1-x},1\right),$$
which may be iterated to obtain \eqref{12-3th1e1}.
\end{proof}

Let $S(n,k)$ denote the Stirling number of the second kind and $B_n=\sum_{k=0}^{n}S(n,k)$  the $n$-th Bell number.  We have the following explicit formula for $d_{12\text{-}3}(n)$ in terms of Stirling and Bell numbers.

\begin{corollary}\label{12-3cor}
If $n \geq 3$, then
\begin{equation}\label{12-3core1}
d_{12\text{-}3}(n)=B_{n-2}+\sum_{j=1}^{n-3}\sum_{r=j}^{n-3}jS(r,j)(j-1)^{n-r-3}.
\end{equation}
\end{corollary}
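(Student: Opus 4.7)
The plan is to pass to ordinary generating functions and verify that the series whose coefficients are the right-hand side of \eqref{12-3core1} equals the specialization of $W(x,1)$ at $y=1$ given by Theorem \ref{12-3th1}. The main tools will be the simplification of the summand of \eqref{12-3th1e1} after setting $y=1$, together with the standard generating function identities $\sum_{r\geq j}S(r,j)x^{r}=x^{j}/\prod_{k=1}^{j}(1-kx)$ and $\sum_{n\geq 0}B_{n}x^{n}=\sum_{j\geq 0}x^{j}/\prod_{k=1}^{j}(1-kx)$.

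First, I would specialize \eqref{12-3th1e1} to $y=1$. The numerator product becomes $\prod_{i=0}^{j}(1-(i-1)x)=(1+x)\prod_{k=1}^{j-1}(1-kx)$, since the $i=0$ factor is $1+x$ and the $i=1$ factor is $1$. Dividing by $\prod_{i=0}^{j}(1-ix)^{2}$ and invoking the collapse $(1-jx)^{2}\prod_{k=1}^{j-1}(1-kx)=(1-jx)\prod_{k=1}^{j}(1-kx)$, the summand reduces so that
\[
W(x,1)|_{y=1}=x^{3}(1+x)\sum_{j\geq 0}\frac{x^{j}}{(1-(j+1)x)(1-jx)\prod_{k=1}^{j}(1-kx)}.
\]

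In parallel, I would encode the right-hand side of \eqref{12-3core1} as a generating function. The Bell-number piece contributes $x^{2}(\sum_{n\geq 0}B_{n}x^{n}-1)=\sum_{j\geq 1}x^{j+2}/\prod_{k=1}^{j}(1-kx)$. For the double sum, swapping the order of summation and performing the geometric sum $\sum_{m\geq 0}((j-1)x)^{m}=1/(1-(j-1)x)$ (with the convention $0^{0}=1$ at $j=1$) produces $\sum_{j\geq 1}jx^{j+3}/[(1-(j-1)x)\prod_{k=1}^{j}(1-kx)]$. Adding the two pieces over a common denominator gives the simplifying factor $1+jx/(1-(j-1)x)=(1+x)/(1-(j-1)x)$, so both contributions merge into $x^{2}(1+x)\sum_{j\geq 1}x^{j}/[(1-(j-1)x)\prod_{k=1}^{j}(1-kx)]$. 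Reindexing $j=m+1$ then yields
\[
x^{3}(1+x)\sum_{m\geq 0}\frac{x^{m}}{(1-mx)(1-(m+1)x)\prod_{k=1}^{m}(1-kx)},
\]
which matches the simplified form of $W(x,1)|_{y=1}$ from the first step. Equating coefficients of $x^{n}$ then establishes \eqref{12-3core1}.

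I expect the main obstacle to be careful bookkeeping: collapsing the squared factor $(1-jx)^{2}\prod_{k=1}^{j-1}(1-kx)$ cleanly, subtracting the $B_{0}=1$ term when shifting $\sum_{n\geq 3}B_{n-2}x^{n}$ into a Bell generating function, and aligning indices after the reindexing $j\mapsto m+1$. Once these are handled, the identity reduces to a routine manipulation of the standard ordinary generating functions for Stirling numbers of the second kind.
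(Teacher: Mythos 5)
Your proposal is correct and takes essentially the same route as the paper: both specialize \eqref{12-3th1e1} at $y=1$, simplify the summand to $x^2(1+x)\sum_{j\geq1}\frac{x^j}{(1-(j-1)x)\prod_{i=1}^{j}(1-ix)}$, and then invoke the identities $\sum_{r\geq j}S(r,j)x^r=x^j/\prod_{i=1}^j(1-ix)$ and the ordinary Bell-number generating function. The only difference is directional --- the paper extracts $[x^n]$ and regroups the resulting double sums via $(j-1)^{n-r-2}+(j-1)^{n-r-3}=j(j-1)^{n-r-3}$, whereas you encode the claimed right-hand side as a series and use the equivalent identity $1+\frac{jx}{1-(j-1)x}=\frac{1+x}{1-(j-1)x}$ --- which is the same computation run in reverse.
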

\begin{proof}
The formula is clear for $n=3$, as $d_{12\text{-}3}(3)=1$, so we may assume $n \geq 4$.  By \eqref{12-3th1e1}, we have
$$W(x,1)\mid_{y=1}=x^3\sum_{j\geq0}\frac{x^j(1+x)}{(1-jx)\prod_{i=1}^{j+1}(1-ix)}=x^2(1+x)\sum_{j\geq1}\frac{x^j}{(1-(j-1)x)\prod_{i=1}^j(1-ix)}.$$
Using the well-known fact $\frac{x^j}{\prod_{i=1}^j(1-ix)}=\sum_{r \geq j}S(r,j)x^r$ for $j \geq 1$ (see, e.g., \cite[p.~34]{Stan}), we then obtain
\begin{align*}
d_{12\text{-}3}(n)&=[x^n]W(x,1)\mid_{y=1}\\
&=[x^{n-2}]\left(\sum_{j\geq1}\frac{x^j}{\prod_{i=1}^j(1-ix)}\cdot\frac{1}{1-(j-1)x}\right)+[x^{n-3}]\left(\sum_{j\geq1}\frac{x^j}{\prod_{i=1}^j(1-ix)}\cdot\frac{1}{1-(j-1)x}\right)\\
&=\sum_{j=1}^{n-2}\sum_{r=j}^{n-2}S(r,j)(j-1)^{n-r-2}+\sum_{j=1}^{n-3}\sum_{r=j}^{n-3}S(r,j)(j-1)^{n-r-3}\\
&=\sum_{j=1}^{n-2}S(n-2,j)+\sum_{j=1}^{n-3}\sum_{r=j}^{n-3}S(r,j)\left((j-1)^{n-r-2}+(j-1)^{n-r-3}\right)\\
&=B_{n-2}+\sum_{j=1}^{n-3}\sum_{r=j}^{n-3}jS(r,j)(j-1)^{n-r-3},
\end{align*}
as desired.
\end{proof}

\subsection{23-1 and 32-1}\label{s23-1}

Note first the equivalence $23\text{-}1\approx32\text{-}1$ in the flattened sense for derangements, which follows from restricting the bijection given in \cite{MSW2} demonstrating the same equivalence for permutations in general.  Furthermore, this bijection is seen to preserve the cycle structure and hence, in particular, we have $d_{23\text{-}1}(n;y)=d_{32\text{-}1}(n;y)$.

We thus focus only upon enumerating the members of $\mathcal{E}_n=\mathcal{D}_{23\text{-}1}(n)$.  Let $d_{23\text{-}1}(n;y)$ be denoted by $e_n=e_n(y)$ for $n \geq 2$. The sequence $e_n$ is given recursively as follows.

\begin{lemma}\label{23-1lem1}
If $n \geq 3$, then
\begin{equation}\label{23-1lem1e1}
e_n=y+e_{n-1}+\sum_{i=1}^{n-3}\left(\binom{n-2}{i}+y\binom{n-1}{i}\right)e_{n-i-1},
\end{equation}
with $e_2=y$.
\end{lemma}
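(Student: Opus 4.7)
The plan is to partition $\mathcal{E}_n = \mathcal{D}_{23\text{-}1}(n)$ according to whether the letter $2$ lies in the first cycle of $\pi$ or not, and in either case exploit the following structural observation: if $\pi\in\mathcal{E}_n$ and $2$ occurs at position $j$ of $\pi'$, then whenever $j\geq 3$, the entries $\pi'_2,\pi'_3,\ldots,\pi'_{j-1}$ form a strictly decreasing sequence in $[3,n]$. Indeed, these entries are all at least $3$ (since $\pi'_1=1$ and the $2$ sits at position $j$), so any ascent amongst them would have bottom at least $3$, and the later occurrence of $2$ would witness an instance of 23-1. Hence the prefix of $\pi'$ preceding the letter $2$ is always $1$ followed by a strictly decreasing block drawn from $[3,n]$.

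\emph{Case I} ($2$ in the first cycle): the first cycle then has the form $(1,a_1,\ldots,a_{t-1},2,c_1,\ldots,c_s)$ with $t\geq 1$ and $s\geq 0$, where $a_1>\cdots>a_{t-1}\geq 3$. I plan to define a bijection that deletes $a_1,\ldots,a_{t-1}$ from the first cycle, leaving $(1,2,c_1,\ldots,c_s)$, and then relabels the ambient set $[n]\setminus\{a_1,\ldots,a_{t-1}\}$ order-preservingly onto $[m]$ with $m=n-t+1$, producing $\tilde\pi\in\mathcal{E}_m$ satisfying $\tilde\pi'_2=2$. Preservation of 23-1 avoidance in both directions hinges on the fact that the deleted/inserted block is strictly decreasing and lies above $2$, so it interacts with the rest of $\pi'$ only through the trivial ascent $1<a_1$ (which has no smaller letters available after it). Since the subset $\{a_1,\ldots,a_{t-1}\}\subseteq[3,n]$ may be chosen freely of size $t-1$ and the cycle count is preserved, Case I contributes $\sum_{t=1}^{n-1}\binom{n-2}{t-1}(ye_{m-2}+e_{m-1})$, using the subsidiary fact that the weighted number of $\tilde\pi\in\mathcal{E}_m$ with $\tilde\pi'_2=2$ equals $ye_{m-2}+e_{m-1}$. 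This last count is obtained by splitting on the length of the first cycle of $\tilde\pi$: if it is the transposition $(1,2)$, extracting the transposition yields a bijection with $\mathcal{E}_{m-2}$ (contributing $ye_{m-2}$); while if $(1,2,\ldots)$ has length at least three, deleting the $2$ from this cycle and relabeling yields a bijection with $\mathcal{E}_{m-1}$ (contributing $e_{m-1}$).

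\emph{Case II} ($2$ not in the first cycle): then cycle two begins with $2$, and cycle one has the form $(1,a_1,\ldots,a_k)$ with $k\geq 1$ and $a_1>\cdots>a_k\geq 3$. The remaining cycles constitute a derangement of $\{2\}\cup([3,n]\setminus\{a_1,\ldots,a_k\})$, which after order-preserving relabeling is an arbitrary member of $\mathcal{E}_{n-k-1}$; since the boundary $a_k,2$ is a descent (as $a_k\geq 3>2$), the prefix imposes no 23-1 constraint on the suffix. This case therefore contributes $y\sum_{k=1}^{n-3}\binom{n-2}{k}e_{n-k-1}$, with the factor of $y$ recording the single first cycle.

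Summing the two cases, reindexing so that each $e_{n-i-1}$ term is uniformly weighted, and applying Pascal's identity $\binom{n-2}{i-1}+\binom{n-2}{i}=\binom{n-1}{i}$ to combine the two $y$-weighted sums (together with the natural conventions $e_0=1$ and $e_1=0$ that arise from the boundary values $m=2,3$ in Case I) should yield exactly the stated recurrence, while the base case $e_2=y$ is immediate. The main technical obstacle is the careful verification that the deletion/insertion of the strictly decreasing block in Case I preserves 23-1 avoidance in both directions; once this structural step is in hand, the combinatorial decomposition and the final algebraic simplification via Pascal are routine.
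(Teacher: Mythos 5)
Your proposal is correct and follows essentially the same route as the paper: both hinge on the observation that the letters between $1$ and $2$ in $\pi'$ form a freely chosen decreasing block from $[3,n]$ that cannot participate in any occurrence of 23-1, which produces the binomial coefficients, and both then split on the structure of the first cycle (your two-step reduction to $\tilde\pi'_2=2$ followed by a split on the first cycle's length reproduces exactly the paper's intermediate identity before Pascal's rule is applied). The only cosmetic difference is that in the case of a first cycle $(1\alpha 2\beta)$ with $\beta$ nonempty you delete the $2$ and keep $1$ as the anchor, whereas the paper deletes $1\alpha$ and lets $2\beta$ head the reduced derangement; both yield the same contribution $e_{n-i-1}$.
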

\begin{proof}
Given $\pi \in \mathcal{E}_n$ where $n \geq 3$, let $\alpha$ denote the subset of the letters in $[3,n]$ occurring between 1 and 2 in $\pi'$.  Note that the letters within $\alpha$ must be decreasing in order to avoid 23-1, but otherwise do not affect the avoidance or containment of 23-1 in the remaining section of $\pi'$.  We consider several cases on $\alpha$ as follows, where $i$ denotes the length of $\alpha$ in each case.  If the first cycle of $\pi$ comprises the letters in $1\alpha$, then deleting the first cycle is seen to result in a member of $\mathcal{E}_{n-i-1}$ on the alphabet $[2,n]-\alpha$.  Thus, the weight of such $\pi$ is given by $y\binom{n-2}{i}e_{n-i-1}$ for each $i$, where the factor of $y$ accounts for the first cycle.  Note $1 \leq i \leq n-3$ in this case since at least one letter must occur between 1 and 2 as well as to the right of 2 in $\pi'$.  Considering all possible $i$ then yields a contribution towards the overall weight $e_n$ of $y\sum_{i=1}^{n-3}\binom{n-2}{i}e_{n-i-1}$.

Now suppose that the first cycle of $\pi$ is given by $(1\alpha 2)$, with $\pi$ containing at least two cycles.  The we have $0 \leq i \leq n-4$ in this case, since at least two letters must occur to the right of 2 in $\pi'$ with $\alpha$ allowed to be empty, and hence we get a contribution of $y\sum_{i=0}^{n-4}\binom{n-2}{i}e_{n-i-2}$ from such $\pi$.  On the other hand, if the first cycle of $\pi$ has the form $(1\alpha2\beta)$, where $\beta$ is nonempty, then one may regard the section $2\beta$ as the first cycle within a member of $\mathcal{E}_{n-i-1}$, expressed using the alphabet $[2,n]-\alpha$.  Considering all possible $i$ then gives a contribution towards $e_n$ of $\sum_{i=0}^{n-3}\binom{n-2}{i}e_{n-i-1}$.  Note that no extra factor of $y$ is required in this case as the first cycle of $\pi$ is accounted for in $e_{n-i-1}$.  Finally, if $\pi$ consists of a single cycle ending in 2, then there is only one possibility, namely, $(1n(n-1)\cdots 2)$, which is of weight $y$.  Combining all of the preceding cases gives
\begin{equation}\label{23-1lem1e2}
e_n=y+y\sum_{i=1}^{n-3}\binom{n-2}{i}e_{n-i-1}+y\sum_{i=0}^{n-4}\binom{n-2}{i}e_{n-i-2}+\sum_{i=0}^{n-3}\binom{n-2}{i}e_{n-i-1}.
\end{equation}
Rewriting \eqref{23-1lem1e2}, using the fact $\binom{n-1}{i}=\binom{n-2}{i}+\binom{n-2}{i-1}$, gives \eqref{23-1lem1e1}.
\end{proof}

Due to the presence of the binomial coefficients in the recurrence for $e_n(y)$, it is more convenient to consider the exponential generating function $E(x,y)=\sum_{n\geq2}e_n(y)\frac{x^n}{n!}$.  Multiplying both sides of \eqref{23-1lem1e2} by $\frac{x^{n-2}}{(n-2)!}$, and summing over all $n \geq 3$, gives
\begin{align*}
\frac{\partial^2}{\partial x^2}E(x,y)-y&=\sum_{n\geq3}e_n\frac{x^{n-2}}{(n-2)!}\\
&=y\sum_{n\geq3}\frac{x^{n-2}}{(n-2)!}+\sum_{n\geq3}e_{n-1}\frac{x^{n-2}}{(n-2)!}+(y+1)\sum_{n\geq4}\frac{x^{n-2}}{(n-2)!}\sum_{i=1}^{n-3}\binom{n-2}{i}e_{n-i-1}\\
&\quad+y\sum_{n\geq4}\frac{x^{n-2}}{(n-2)!}\sum_{i=0}^{n-4}\binom{n-2}{i}e_{n-i-2}\\
&=y(e^x-1)+\frac{\partial}{\partial x}E(x,y)+(y+1)(e^x-1)\frac{\partial}{\partial x}E(x,y)+ye^xE(x,y),
\end{align*}
which implies $E(x,y)$ satisfies the following second-order differential equation.

\begin{lemma}\label{23-1lem2}
We have
\begin{equation}\label{23-1lem2e1}
\frac{\partial^2}{\partial x^2}E(x,y)=ye^x(1+E(x,y))+((y+1)e^x-y)\frac{\partial}{\partial x}E(x,y).
\end{equation}
\end{lemma}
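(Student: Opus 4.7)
The plan is to derive the PDE by applying $\sum_{n\geq 3}\frac{x^{n-2}}{(n-2)!}(\cdot)$ to both sides of the recurrence (\ref{23-1lem1e1}) for $e_n=e_n(y)$.  On the left, since $\partial_x^2E(x,y)=\sum_{n\geq 2}e_n\frac{x^{n-2}}{(n-2)!}$, removing the $n=2$ term leaves $\partial_x^2E(x,y)-y$.  On the right, the job is to recognize each piece as a product of elementary series in $E$, $\partial_xE$, and $e^x$.

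As a preliminary simplification, I would use Pascal's identity $\binom{n-1}{i}=\binom{n-2}{i}+\binom{n-2}{i-1}$ inside (\ref{23-1lem1e1}) and then re-index the $\binom{n-2}{i-1}$ piece via $j=i-1$, obtaining the four-term form
$$e_n=y+e_{n-1}+(y+1)\sum_{i=1}^{n-3}\binom{n-2}{i}e_{n-i-1}+y\sum_{j=0}^{n-4}\binom{n-2}{j}e_{n-j-2},$$
which is precisely relation (\ref{23-1lem1e2}) obtained inside the proof of Lemma \ref{23-1lem1}.  The constant $y$ then contributes $y(e^x-1)$, while $e_{n-1}$ contributes $\sum_{n\geq 3}e_{n-1}\frac{x^{n-2}}{(n-2)!}=\partial_xE(x,y)$.

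The slightly delicate step is the two binomial-weighted convolutions.  Substituting $m=n-2$ and swapping the order of summation, one checks
$$\sum_{n\geq 4}\frac{x^{n-2}}{(n-2)!}\sum_{i=1}^{n-3}\binom{n-2}{i}e_{n-i-1}=(e^x-1)\,\partial_xE(x,y),$$
$$\sum_{n\geq 4}\frac{x^{n-2}}{(n-2)!}\sum_{i=0}^{n-4}\binom{n-2}{i}e_{n-i-2}=e^xE(x,y),$$
the key subtlety being that in the first identity the inner indices $i$ and $n-i-1$ sum to $n-1$ rather than $n-2$, forcing the second factor to be the derivative $\partial_xE$ rather than $E$ itself.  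Assembling the four contributions gives
$$\partial_x^2E(x,y)-y=y(e^x-1)+\partial_xE(x,y)+(y+1)(e^x-1)\,\partial_xE(x,y)+ye^xE(x,y),$$
and a short rearrangement — grouping $ye^x$ with $ye^xE$ into $ye^x(1+E(x,y))$ and collapsing the $\partial_xE$ terms via $1+(y+1)(e^x-1)=(y+1)e^x-y$ — yields (\ref{23-1lem2e1}).  The main obstacle is purely bookkeeping: getting the index shifts in the binomial sums right, particularly recognizing which sum produces $\partial_xE$ versus $E$; once this is handled, the rest is routine EGF manipulation.
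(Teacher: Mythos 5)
Your proposal is correct and follows essentially the same route as the paper: the paper likewise multiplies the intermediate form \eqref{23-1lem1e2} of the recurrence by $\frac{x^{n-2}}{(n-2)!}$, sums over $n\geq3$, and identifies the two binomial convolutions as $(e^x-1)\frac{\partial}{\partial x}E(x,y)$ and $e^xE(x,y)$ before rearranging. The only cosmetic difference is that you recover \eqref{23-1lem1e2} from \eqref{23-1lem1e1} by undoing Pascal's identity, whereas the paper simply reuses \eqref{23-1lem1e2} as already established inside the proof of Lemma \ref{23-1lem1}.
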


Though an elementary closed-form solution cannot apparently be found for \eqref{23-1lem2e1}, we nonetheless have the following series expression for $E(x,y)$.

\begin{theorem}\label{23-1thm}
The exponential generating function enumerating members of $\mathcal{D}_{23\text{-}1}(n)$ for $n \geq 2$ according to the number of cycles is given by
\begin{equation}\label{23-1thme1}
E(x,y)=\sum_{k\geq2}\left[\frac{y(e^x-1)^k}{k!}\prod_{j=3}^k\left(2-j+\frac{j-2+(j-1)y}{3-j+\frac{j-3+(j-2)y}{-3+\frac{\ddots}{-2+\frac{2+3y}{-1}}}}\right)\right].
\end{equation}
Furthermore, the distribution for the number of cycles statistic on $\mathcal{D}_{23\text{-}1}(n)$ is given by
\begin{equation}\label{23-1thme2}
e_n(y)=y\sum_{k=2}^n\left[S(n,k)\prod_{j=3}^k\left(2-j+\frac{j-2+(j-1)y}{3-j+\frac{j-3+(j-2)y}{-3+\frac{\ddots}{-2+\frac{2+3y}{-1}}
}}\right)\right], \qquad n \geq2,
\end{equation}
where $S(n,k)$ denotes the Stirling number of the second kind.
\end{theorem}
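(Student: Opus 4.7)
The shape of the claimed answer---a series in the functions $(e^x-1)^k/k!$---strongly suggests making the change of variable $z=e^x-1$ in equation \eqref{23-1lem2e1}. Setting $G(z,y)=E(x,y)$, we have $E_x=(1+z)\,G_z$ and $E_{xx}=(1+z)\,G_z+(1+z)^2\,G_{zz}$, and one notes the identity $(y+1)e^x-y=1+(y+1)z$. Substituting and cancelling one factor of $1+z$ reduces \eqref{23-1lem2e1} to the much cleaner equation
$$(1+z)\,G_{zz}\;=\;y(1+G)+(y+1)\,z\,G_z.$$

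Next, I would expand $G=\sum_{k\ge 0}c_k(y)\,z^k/k!$, where the initial conditions $E(0,y)=0$ and $E_x(0,y)=e_1(y)=0$ force $c_0=c_1=0$. Extracting the coefficient of $z^m/m!$ from the simplified equation yields $c_2=y$ (from $m=0$) and, for $m\ge 1$, the three-term recurrence
$$c_k\;=\;-(k-2)\,c_{k-1}+\bigl((k-1)y+(k-2)\bigr)c_{k-2},\qquad k=m+2\ge 3,$$
which in particular gives $c_3=-y$ (using $c_1=0$). Setting $r_k=c_k/c_{k-1}$ for $k\ge 3$ and dividing this recurrence by $c_{k-1}$ converts it into the first-order nonlinear recursion
$$r_k\;=\;(2-k)+\frac{(k-2)+(k-1)y}{r_{k-1}},\qquad r_3=-1,$$
whose iterative unrolling down to $r_3$ reproduces exactly the finite continued fraction displayed in \eqref{23-1thme1}. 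The product then telescopes, $\prod_{j=3}^{k}r_j=c_k/c_2$, giving $c_k(y)=y\prod_{j=3}^{k}r_j$, so that $E(x,y)=G(e^x-1,y)=\sum_{k\ge 2}c_k(y)(e^x-1)^k/k!$ is exactly formula \eqref{23-1thme1}. The companion formula \eqref{23-1thme2} then drops out instantly by equating coefficients of $x^n/n!$ on both sides via the classical identity $(e^x-1)^k/k!=\sum_{n\ge k}S(n,k)\,x^n/n!$, the sum truncating at $k=n$ because $S(n,k)=0$ for $k>n$.

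The step that requires the most care is the well-definedness of each ratio $r_k$, i.e., verifying that $c_{k-1}(y)$ is a nonzero polynomial in $y$ so that the symbolic division used above is legitimate. A short induction on $k$ suffices: one tracks the leading-in-$y$ coefficient of $c_k$, which arises from the $(k-1)y\,c_{k-2}$ term on the right-hand side of the three-term recurrence, and checks that it is never zero. Apart from this point, the argument is routine; the only remaining bookkeeping is to confirm that the numerators $(j-2)+(j-1)y$ and the decreasing sequence of denominators $2-j,\,3-j,\,\ldots,\,-2,\,-1$ appearing in the displayed continued fraction match the successive iterates of the $r_k$-recursion, which they do by construction.
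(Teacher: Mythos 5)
Your argument is correct and follows essentially the same route as the paper's proof: the same substitution $t=e^x-1$, the same three-term recurrence for the series coefficients (yours are normalized as $c_k=k!\,a_k$ and you work with $E$ rather than $F=E+1$, which changes nothing), and the same conversion of the ratios $c_k/c_{k-1}$ into the finite continued fraction, followed by the telescoping product and the Stirling-number expansion of $(e^x-1)^k/k!$. The only point where you go beyond the paper is in flagging the nonvanishing of the $c_{k-1}(y)$, a reasonable formality; just note that for odd $k$ the top-degree term of $c_k$ receives contributions from \emph{both} terms of the recurrence, not only from $(k-1)y\,c_{k-2}$, so your induction should track signs (or, more simply, show $(-1)^kc_k(1)>0$ by induction).
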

\begin{proof}
Let $F(x,y)=E(x,y)+1$. By \eqref{23-1lem2e1}, we have
\begin{align*}
\frac{\partial^2}{\partial x^2}F(x,y)=ye^xF(x,y)+((y+1)e^x-y)\frac{\partial}{\partial x}F(x,y).
\end{align*}
Let $G$ be such that $F(x,y)=G(e^x-1,y)$.  Then generating function $G(t,y)$ satisfies
\begin{align}\label{23-1thme3}
(t+1)\frac{\partial^2}{\partial t^2}G(t,y) -t(y+1)\frac{\partial}{\partial t}G(t,y)-yG(t,y)&=0,
\end{align}
with $G(0,y)=1$ and $\frac{\partial}{\partial t}G(t,y)\mid_{t=0}=0$.
Define $G(t,y)=1+\sum_{n\geq2} a_nt^n$. By \eqref{23-1thme3}, the coefficients $a_n$ satisfy the recursion
\begin{equation}\label{23-1thme0}
a_{n+2}=\frac{y+(y+1)n}{(n+1)(n+2)}a_n-\frac{n}{n+2}a_{n+1}, \qquad n \geq 2,
\end{equation}
with $a_2=y/2$ and $a_3=-y/6$.

Define $b_n=\frac{a_n}{a_{n-1}}$. Then, by \eqref{23-1thme0}, we have
$$b_n=-\frac{n-2}{n}+\frac{n-2+(n-1)y}{n(n-1)b_{n-1}}, \qquad n \geq 4,$$
with $b_3=-1/3$, and hence
$$nb_n=
2-n+\frac{n-2+(n-1)y}{3-n+\frac{n-3+(n-2)y}{-3+\frac{\ddots}{-2+\frac{2+3y}{-1}}
}}.$$
Note $a_n=a_2b_3b_4\cdots b_n$, and thus since $a_2=y/2$, we have
$$a_n=\frac{y}{n!}\prod_{j=3}^n\left(2-j+\frac{j-2+(j-1)y}{3-j+\frac{j-3+(j-2)y}{-3+\frac{\ddots}{-2+\frac{2+3y}{-1}}
}}\right), \qquad n \geq 2.$$
Formula \eqref{23-1thme1} now follows from $E(x,y)=F(x,y)-1=G(e^x-1,y)-1$. The expression for $e_n(y)$ in \eqref{23-1thme2} then follows from extracting the coefficient of $\frac{x^n}{n!}$ on the right-hand side of \eqref{23-1thme1} and recalling the well-known fact $\frac{(e^x-1)^k}{k!}=\sum_{n \geq k}S(n,k)\frac{x^n}{n!}$, see, e.g., \cite[p.~34]{Stan}.
\end{proof}

Let $M_n$ for $n \geq 1$ denote the $n \times n$ matrix defined by

$$M_n=\left(\begin{array}{ccccccc}
-1       & -3     & 0  &0 &  \cdots       & 0      & 0 \\
y+\frac{2}{3}        & -2       & -4 &0  & \cdots        & 0      & 0 \\
0        & y+\frac{3}{4}      & -3   & -5   & \cdots        & 0      & 0 \\
\cdots     & \cdots &\cdots   & \cdots & \ddots & \cdots & \cdots \\
0   & \cdots   & \cdots  & 0 &y+\frac{n-1}{n} &-(n-1) & -(n+1) \\
0     & \cdots & \cdots   & \cdots & 0 &y+\frac{n}{n+1}   & -n
\end{array}\right)\!.$$
Then there is the following alternative expression for $e_n(y)$ to that given above in \eqref{23-1thme2} involving determinants of $M_n$.

\begin{proposition}\label{23-1prop}
If $n \geq 2$, then
\begin{equation}\label{23-1prope1}
e_n(y)=y\sum_{k=2}^nS(n,k)\det(M_{k-2}),
\end{equation}
where  $\det(M_0)=1$ and the matrix $M_{j}$ for $j \geq1$ is as defined above.
\end{proposition}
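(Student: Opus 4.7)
The plan is to prove Proposition \ref{23-1prop} by identifying the continued-fraction product appearing in Theorem \ref{23-1thm} with the tridiagonal determinant. Specifically, denote by $jb_j$ the continued-fraction factor indexed by $j$ in the formula \eqref{23-1thme2}, so that the theorem reads $e_n(y) = y\sum_{k=2}^{n} S(n,k)\, p_k$, where $p_k := \prod_{j=3}^{k} (jb_j)$ (with the empty product $p_2 = 1$). It then suffices to show that $p_k = \det(M_{k-2})$ for all $k\geq 2$, and the standard way to do this is to verify that both sequences satisfy the same three-term recurrence with the same initial values.

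For the first leg, I would exploit the relation $nb_n = (2-n) + \frac{n-2+(n-1)y}{(n-1)b_{n-1}}$ established in the proof of Theorem \ref{23-1thm}. Multiplying this by $(n-1)b_{n-1}$ gives $(nb_n)\cdot\bigl((n-1)b_{n-1}\bigr) = (2-n)(n-1)b_{n-1} + \bigl(n-2+(n-1)y\bigr)$. Since $(n-1)b_{n-1} = p_{n-1}/p_{n-2}$ and $(nb_n)(n-1)b_{n-1} = p_n/p_{n-2}$, this rearranges to the three-term recurrence
$$p_n = (2-n)\,p_{n-1} + \bigl(n-2+(n-1)y\bigr)\,p_{n-2}, \qquad n \geq 4,$$
with initial values $p_2 = 1$ and $p_3 = 3b_3 = -1$.

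For the second leg, I would compute $d_n := \det(M_n)$ by cofactor expansion along the last row of $M_n$, whose only nonzero entries are $y+\tfrac{n}{n+1}$ in column $n-1$ and $-n$ in column $n$. The $(n,n-1)$ minor is block upper-triangular, consisting of the principal submatrix $M_{n-2}$ in the upper-left corner, a single entry $-(n+1)$ in the bottom-right corner, and a zero block in the top-right (because the super-diagonal of $M_n$ only reaches the entry $M_n[n-1,n]$). Its determinant is therefore $-(n+1)\det(M_{n-2})$, so the standard tridiagonal expansion yields
$$d_n = -n\,d_{n-1} + \bigl(n+(n+1)y\bigr)\,d_{n-2}, \qquad n \geq 2,$$
with $d_0 = 1$ (by convention) and $d_1 = -1$.

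Finally, substituting $n \mapsto k-2$ into the recurrence for $d_n$ transforms it term by term into the recurrence above for $p_k$, and the initial values also agree: $p_2 = d_0 = 1$ and $p_3 = d_1 = -1$. A straightforward induction on $k$ then gives $p_k = \det(M_{k-2})$, whence substituting into \eqref{23-1thme2} produces the claimed formula \eqref{23-1prope1}. The only real difficulty is the bookkeeping — one must read off the entries of $M_n$ carefully, keep track of signs in the cofactor expansion, and match the index shift $p_k \leftrightarrow d_{k-2}$ against the recurrence coefficients; none of these steps is conceptually hard, but each offers ample opportunity for off-by-one errors.
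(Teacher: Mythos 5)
Your proof is correct and follows essentially the same route as the paper's: both reduce the claim to the three-term recurrence \eqref{23-1thme0} for the coefficients $a_k$ and to the standard correspondence between such recurrences and tridiagonal determinants. The only difference is mechanical — the paper quotes the determinantal representation of $a_{n+2}$ in terms of the $\alpha_i,\beta_i$ and then rescales to obtain $M_n$, whereas you verify by cofactor expansion that $\det(M_n)$ satisfies the same recurrence as $p_{n+2}=(n+2)!\,a_{n+2}/y$ and match initial values — and the two verifications are interchangeable.
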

\begin{proof}
Let $\alpha_n=-\frac{n}{n+2}$ and $\beta_n=\frac{y+(y+1)n}{(n+1)(n+2)}$.  Then recurrence \eqref{23-1thme0} may be written as $a_{n+2}=\alpha_na_{n+1}+\beta_na_n$ for $n \geq 2$, with $a_2=\beta_0=y/2$ and $\alpha_3=\alpha_1\beta_0$.  This implies $a_{n+2}$ for $n \geq 1$ is given as a determinant by
$$a_{n+2}=\frac{y}{2}\left|
\begin{array}{lllllll}
\alpha_1&-1&0&0&\cdots&0\\
\beta_2&\alpha_2&-1&0&\cdots&0\\
0&\beta_3&\alpha_3&-1&\cdots&0\\
 &\ddots&\ddots&\ddots&\ddots&\\
0&\cdots&0&\beta_{n-1}&\alpha_{n-1}&-1\\
0&\cdots&0&0&\beta_{n}&\alpha_{n}\\
\end{array}
\right|,$$
and hence
$$a_{n+2}=\frac{y}{(n+2)!}\left|
\begin{array}{lllllll}
-1&-3&0&0&\cdots&0\\
y+\frac{2}{3}&-2&-4&0&\cdots&0\\
0&y+\frac{3}{4}&-3&-5&\cdots&0\\
 &\ddots&\ddots&\ddots&\ddots&\\
0&\cdots&0&y+\frac{n-1}{n}&-(n-1)&-(n+1)\\
0&\cdots&0&0&y+\frac{n}{n+1}&-n\\
\end{array}
\right|.$$
By \eqref{23-1thme2} and the definitions, we then have
$$e_n(y)=\sum_{k=2}^nS(n,k)k!a_k=y\sum_{k=2}^nS(n,k)\det(M_{k-2}),$$
as desired.
\end{proof}

\section{Patterns of type (1,2)}

In this section, we consider the problem of avoiding a vincular pattern of type (1,2) in the flattened sense.  We first remark that the case 1-23 is trivial in that a derangement $\pi$ which avoids 1-23 must have flattened form $1n(n-1)\cdots 2$, and hence only $(1n(n-1)\cdots 2)$ is possible, by the ordering of cycles.  Also, if $\pi \in \mathcal{D}(n)$ avoids 1-32, then $\pi'=1\cdots n$, and hence $d_{1\text{-}32}(n;y)=yf_{n-2}(y)$, upon reasoning as in the proof of Proposition \ref{prop13-2}.  In particular, we have $d_{1\text{-}32}(n)=f_{n-2}$ for all $n \geq 2$.

We now argue that 2-31 is logically equivalent to the classical patterns 2-3-1, with the avoiders of the latter having already been enumerated in \cite[Theorem~2.4]{MSder}.  Clearly, we have $\mathcal{D}_{2\text{-}3\text{-}1}(n)\subseteq \mathcal{D}_{2\text{-}31}(n)$.  To show the reverse inclusion, we argue that if $\pi \in \mathcal{D}(n)$ is such that its flattened form $\pi'=\pi_1\cdots \pi_n$ contains an occurrence of 2-3-1, then it must also contain 2-31. Suppose $\pi_i\pi_j\pi_k$ is such that $\pi_k<\pi_i<\pi_j$, where $1\leq i<j<k\leq n$.  Let $\ell_0$ denote the smallest index $\ell\in[j+1,k]$ such that $\pi_\ell<\pi_i$; note that $\ell_0$ exists since $\pi_k<\pi_i$.  Then each of $\pi_j,\pi_{j-1},\ldots,\pi_{\ell_0-1}$ must be greater than $\pi_i$, by the minimality of $\ell_0$.  In particular, we have that $\pi_i\pi_{\ell_0-1}\pi_{\ell_0}$ is an occurrence of 2-31 in $\pi'$. Similar reasoning shows that 2-13 and 2-1-3 are logically equivalent as well, with avoiders of the latter having been enumerated in \cite[Theorem~2.1]{MSder}.

The remaining two patterns of type (1,2), namely 3-12 and 3-21, are treated in the following two subsections.

\subsection{The case 3-12}\label{s3-12}

To determine a recurrence enumerating the members of $\mathcal{D}_\tau(n)$ in this and the subsequent case, we will consider a refinement of the distribution $d_\tau(n;y)$ as follows.  Let $z_n=d_{3\text{-}12}(n;y)$ for $n \geq 2$, with $z_1=0$, and let $z_{n,i,j}$ for $1 \leq i < j \leq n$ denote the restriction of $z_n$ to the members of $\mathcal{D}_{3\text{-}12}(n)$ whose last cycle (in standard cycle form) starts with $i$ and ends in $j$.
Note $z_n=\sum_{i=1}^{n-1}\sum_{j=i+1}^nz_{n,i,j}$, by the definitions.  For example, when $n=4$, we have  $z_{4,1,2}=z_{4,1,4}=2y$, $z_{4,1,3}=y$,  $z_{4,2,3}=0$, $z_{4,2,4}=z_{4,3,4}=y^2$, and hence $z_4=2y^2+5y$.

The array $z_{n,i,j}$ is given recursively as follows.

\begin{lemma}\label{3-12lem1}
If $n \geq 3$, then
\begin{equation}\label{3-12lem1e1}
z_{n,i,j}=\sum_{\ell=j}^{n-1}z_{n-1,i,\ell},  \qquad 1 \leq i <j \leq n-1,
\end{equation}
and
\begin{equation}\label{3-12lem1e2}
z_{n,i,n}=\sum_{\ell=i+1}^{n-1}z_{n-1,i,\ell}+y\sum_{\ell=1}^{i-1}z_{n-1}{\ell,i}, \qquad 1 \leq i \leq n-2,
\end{equation}
with $z_{n,n-1,n}=yz_{n-2}$ and the initial condition $z_{2,1,2}=y$.
\end{lemma}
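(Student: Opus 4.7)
The plan is to prove each part of the lemma via explicit size-decreasing bijections. The key structural observation, used repeatedly, is the following: if $\pi \in \mathcal{D}_{3\text{-}12}(n)$ has last cycle starting at $i$ and ending at $j$ with $j \leq n-1$, then the element $p$ immediately preceding $j$ in that cycle must satisfy $p > j$.  Indeed, $p < j$ would make $pj$ a ``12'' adjacency at the end of $\pi'$, and then the letter $n$ (which, since $j \leq n-1$, must appear earlier in $\pi'$) would serve as the ``3'' of a forbidden 3-12 occurrence.  The symmetric observation, which I will also need, is that if $\sigma \in \mathcal{D}_{3\text{-}12}(n-1)$ has last cycle ending at $i$ with $i \leq n-2$, then that last cycle cannot have length exactly two: otherwise some letter of $[i+1,n-1]$ must lie in an earlier cycle and would play the role of ``3'' in a 3-12 pattern whose ``12'' is the terminal adjacency of $\sigma'$.

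For the initial data, $z_{2,1,2}=y$ is immediate from $\pi=(12)$.  For the auxiliary identity $z_{n,n-1,n}=yz_{n-2}$, the last cycle must equal $(n-1,n)$ since $n-1$ is its smallest letter; removing this 2-cycle gives a bijection with $\mathcal{D}_{3\text{-}12}(n-2)$, as appending $(n-1)n$ to any flattened word on $[n-2]$ cannot introduce a 3-12 occurrence (the only new ``12'' adjacency involves the maximal letter $n$).

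For \eqref{3-12lem1e1}, the bijection I propose deletes $j$ from the last cycle of $\pi$ and decrements by one each letter exceeding $j$.  By the key observation, the letter $p$ preceding $j$ in $\pi$ satisfies $p>j$, so after relabeling it becomes $p-1 \in [j,n-1]$, and the result $\sigma$ is enumerated by $z_{n-1,i,p-1}$.  Pattern preservation is clear since removing the terminal letter $j$ creates no new adjacency and order-preserving relabeling cannot introduce a 3-12 pattern.  The inverse inserts $j$ at the end of $\sigma$'s last cycle after relabeling letters at least $j$ upward; the new terminal pair $(p,j)$ has $p>j$ and is therefore a ``21'', not a ``12'', so it cannot serve as the adjacent part of any 3-12 occurrence.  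Summing over $\ell = p-1 \in [j,n-1]$ yields the claimed identity.

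For \eqref{3-12lem1e2}, I split the class enumerated by $z_{n,i,n}$ according to the length of $\pi$'s last cycle.  When that length is at least three, deleting $n$ produces $\sigma \in \mathcal{D}_{3\text{-}12}(n-1)$ whose last cycle starts at $i$ and ends at some $p \in [i+1,n-1]$, accounting for the first sum.  When the last cycle is the 2-cycle $(i,n)$, I remove it entirely and append $i$ to the end of what was the second-to-last cycle of $\pi$; the result is some $\sigma$ enumerated by $z_{n-1,\ell,i}$ with $\ell<i$, and the factor $y$ compensates for the cycle that was deleted.  The main obstacle is verifying that the inverse of this second map---removing $i$ from $\sigma$'s last cycle and attaching a new 2-cycle $(i,n)$---produces no fixed point; this is precisely where the symmetric observation above is invoked, guaranteeing that $\sigma$'s last cycle has length at least three whenever $i \leq n-2$ and hence the removal of $i$ leaves a valid cycle of length at least two.
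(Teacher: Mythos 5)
Your proof is correct and follows essentially the same route as the paper's: the same key observation that the penultimate letter of $\pi'$ must exceed $j$ when $j\leq n-1$, the same deletion-of-$j$ map for \eqref{3-12lem1e1}, and the same split on the length of the final cycle (with the cycle $(in)$ absorbed into the penultimate cycle) for \eqref{3-12lem1e2}, including the observation that $i\leq n-2$ forces the terminal cycle of the image to have length at least three. The only quibble is immaterial: in justifying $z_{n,n-1,n}=yz_{n-2}$, appending $(n-1)n$ actually creates two new ascent adjacencies, not one, but the second (between the last letter of the reduced word and $n-1$) is also harmless since the only larger letter, $n$, occurs to its right.
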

\begin{proof}
We may assume $n \geq 3$, the initial condition for $n=2$ being clear.  Let $\mathcal{Z}_{n,i,j}$ denote the subset of $\mathcal{D}_{3\text{-}12}(n)$ whose members are enumerated by $z_{n,i,j}$ for $1 \leq i < j \leq n$. Let $\pi \in \mathcal{Z}_{n,i,j}$, with $\pi'=\pi_1\cdots \pi_n$, and first suppose $j<n$.  Note that we then must have $\pi_{n-1}=\ell+1$ for some $\ell \in [j,n-1]$, for otherwise there would be an occurrence of 3-12 as witnessed by the subsequence  $\pi_s\pi_{n-1}\pi_n$ of $\pi'$, where $s<n-1$ is such that $\pi_s=n$.  Deleting $j$, and then reducing each letter of $\pi$ in $[j+1,n]$ by one, is seen to yield an arbitrary member of $\mathcal{Z}_{n-1,i,\ell}$, as the first element of the last cycle is unchanged, with this cycle initially being of length at least three.  Considering all $\ell \in [j,n-1]$ then implies \eqref{3-12lem1e1}.

Now suppose $\pi \in \mathcal{Z}_{n,i,n}$, with $i \in [n-2]$.  If the last cycle of $\pi$ has length three or more, with $\pi_{n-1}=\ell$, then $\ell \in [i+1,n-1]$, by the ordering of cycles, and considering all possible $\ell$ yields a contribution towards the weight of $\sum_{\ell=i+1}^{n-1}z_{n-1,i,\ell}$.  On the other hand, if the final cycle of $\pi$ has length two, then we may delete the terminal cycle $(i n)$ and append $i$ to the penultimate cycle to obtain a derangement of length $n-1$, where $n \geq 3$ implies $i \geq 2$ in this case.  Indeed, all of the members of $\cup_{\ell=1}^{i-1}\mathcal{Z}_{n-1,\ell,i}$ arise uniquely in this fashion since $i<n-1$ implies each derangement in this set has terminal cycle length at least three.  Taking into account the deleted cycle $(i n)$, we get a contribution of $y\sum_{\ell=1}^{i-1}z_{n-1,\ell,i}$, and combining with the prior case yields \eqref{3-12lem1e2}.  Finally, members of the sets $\mathcal{Z}_{n,n-1,n}$ and $\mathcal{D}_{3\text{-}12}(n-2)$ where $n \geq 4$ are seen to be synonymous, upon deleting the terminal cycle containing $n-1$ and $n$ from members of the former.  This implies $z_{n,n-1,n}=yz_{n-2}$ and completes the proof.
\end{proof}

Let $Z_n(u,v)=\sum_{i=1}^{n-1}\sum_{j=i+1}^nz_{n,i,j}u^{n-1-i}v^{n-j}$ for $n \geq 2$ and $Z(x,u,v)=\sum_{n\geq2}Z_n(u,v)x^n$.  Then $Z(x,u,v)$ satisfies the following functional equation.

\begin{lemma}\label{3-12lem2}
We have
\begin{equation}\label{3-12lem2e1}
\left(1-\frac{xuv}{1-v}\right)Z(x,u,v)=\frac{x^2y}{1+xy}-\frac{xuv^2}{1-v}Z(x,uv,1)+xuZ(x,u,1)+xyZ(x,1,u)-\frac{x^2y^2}{1+xy}Z(x,1,1).
\end{equation}
\end{lemma}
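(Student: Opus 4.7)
The plan is to multiply each case of the recurrences in Lemma \ref{3-12lem1} by $x^n u^{n-1-i} v^{n-j}$ and sum over all valid $n$, $i$, $j$, splitting $Z_n(u,v)$ for $n \geq 3$ into three pieces according to whether (i) $j \leq n-1$, (ii) $j = n$ with $i \leq n-2$, or (iii) $(i,j) = (n-1,n)$. The initial value $Z_2(u,v) = y$ supplies the base contribution $x^2 y$ to $Z(x,u,v)$.

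For piece (i), I would apply \eqref{3-12lem1e1}, interchange the order of the resulting sum over $j$ and $\ell$, and use the geometric identity $\sum_{j=i+1}^\ell v^{n-j} = \frac{v^{n-\ell} - v^{n-i}}{1-v}$; after re-indexing the two resulting double sums, this contributes $\frac{xuv}{1-v}Z(x,u,v) - \frac{xuv^2}{1-v}Z(x,uv,1)$. For piece (ii), applying \eqref{3-12lem1e2}, the first inner sum $\sum_{\ell=i+1}^{n-1} z_{n-1,i,\ell}$ is easily recognized as (the $u$-series of) $Z_{n-1}(u,1)$ shifted in $u$, producing $xuZ(x,u,1)$. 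The second inner sum $y\sum_{\ell=1}^{i-1} z_{n-1,\ell,i}$ requires swapping the roles of the first and last letters of the final cycle; after re-indexing it matches $Z_{n-1}(1,u)$ except that the terms with the last letter equal to $n-1$ are absent, giving a net contribution of $xyZ(x,1,u) - xy\tilde{Z}(x)$, where $\tilde{z}_m := \sum_{\ell=1}^{m-1} z_{m,\ell,m}$ tracks avoiders whose final cycle ends in $m$ and $\tilde{Z}(x) := \sum_{m\geq 2} \tilde{z}_m x^m$. Piece (iii) contributes $x^2 y Z(x,1,1)$ via the boundary formula $z_{n,n-1,n} = y z_{n-2}$ together with $z_1 = 0$.

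The main obstacle is the fact that $\tilde{Z}(x)$ is an auxiliary series not directly appearing in the target equation, so one needs a closed-form expression for it; this is precisely what will produce the denominator $1+xy$ in \eqref{3-12lem2e1}. To obtain this, I would sum \eqref{3-12lem1e2} over $i \in [1,n-2]$ and adjoin the boundary term $yz_{n-2}$. The first resulting double sum telescopes to $z_{n-1}$ (the constraints $i\leq n-2$, $i<\ell\leq n-1$ cover all of $\mathcal{D}_{3\text{-}12}(n-1)$), while the second telescopes to $z_{n-1} - \tilde{z}_{n-1}$ (the missing $i=n-1$ terms are exactly those counted by $\tilde{z}_{n-1}$). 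This yields the auxiliary recurrence $\tilde{z}_n + y\tilde{z}_{n-1} = (1+y)z_{n-1} + yz_{n-2}$ valid for $n \geq 3$, which sums to $(1+xy)\tilde{Z}(x) = yx^2 + x(1+y+xy)Z(x,1,1)$.

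Finally, I would assemble the identity
\[
Z(x,u,v) = x^2 y + \frac{xuv}{1-v}Z(x,u,v) - \frac{xuv^2}{1-v}Z(x,uv,1) + xuZ(x,u,1) + xyZ(x,1,u) - xy\tilde{Z}(x) + x^2 y Z(x,1,1),
\]
substitute the expression for $\tilde{Z}(x)$ obtained above, and combine the three terms that involve $x^2 y$ and $Z(x,1,1)$ over the common denominator $1+xy$; this collapses them to $\frac{x^2 y}{1+xy} - \frac{x^2 y^2}{1+xy}Z(x,1,1)$. Moving the $Z(x,u,v)$ contribution to the left side then gives \eqref{3-12lem2e1}.
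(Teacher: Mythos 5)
Your proposal is correct and follows essentially the same route as the paper: you derive the same functional equation with an extra term involving the series over derangements whose final cycle ends in $n$ (your $\tilde{Z}(x)$ is exactly the specialization $Z(x,1,0)$ used in the paper), and you eliminate it via the same auxiliary recurrence $\tilde{z}_n+y\tilde{z}_{n-1}=(1+y)z_{n-1}+yz_{n-2}$, which is precisely the paper's identity $Z_n(1,0)=(1+y)Z_{n-1}(1,1)+yZ_{n-2}(1,1)-yZ_{n-1}(1,0)$. The resulting expression $(1+xy)\tilde{Z}(x)=x^2y+x(1+y+xy)Z(x,1,1)$ and the final assembly agree with the paper's computation, so there is nothing to add.
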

\begin{proof}
Using \eqref{3-12lem1e1} and \eqref{3-12lem1e2}, one can show
\begin{align}
Z_n(u,v)&=\frac{uv}{1-v}\left(Z_{n-1}(u,v)-vZ_{n-1}(uv,1)\right)+uZ_{n-1}(u,1)\notag\\
&\quad+y\left(Z_{n-1}(1,u)+Z_{n-2}(1,1)-Z_{n-1}(1,0)\right), \qquad n \geq 3, \label{Znrec1}
\end{align}
with $Z_1(u,v)=0$ and $Z_2(u,v)=y$. By \eqref{Znrec1}, we then have
\begin{align}
\left(1-\frac{xuv}{1-v}\right)Z(x,u,v)&=x^2y-\frac{xuv^2}{1-v}Z(x,uv,1)+xuZ(x,u,1)\notag\\
&\quad+xy(Z(x,1,u)+xZ(x,1,1)-Z(x,1,0)).\label{Znfuneq1}
\end{align}

We may eliminate the $Z(x,1,0)$ term from \eqref{Znfuneq1} as follows.  First note that by \eqref{3-12lem1e2}, we have
\begin{align}
Z_n(1,0)&=\sum_{i=1}^{n-1}z_{n,i,n}=\sum_{i=1}^{n-2}\sum_{\ell=i+1}^{n-1}z_{n-1,i,\ell}+y\sum_{i=2}^{n-2}\sum_{\ell=1}^{i-1}z_{n-1,\ell,i}+yz_{n-2}\notag\\
&=Z_{n-1}(1,1)+yZ_{n-2}(1,1)+y\left(\sum_{\ell=1}^{n-2}\sum_{i=\ell+1}^{n-1}z_{n-1,\ell,i}-\sum_{\ell=1}^{n-2}z_{n-1,\ell,n-1}\right)\notag\\
&=Z_{n-1}(1,1)+yZ_{n-2}(1,1)+y(Z_{n-1}(1,1)-Z_{n-1}(1,0))\notag\\
&=(1+y)Z_{n-1}(1,1)+yZ_{n-2}(1,1)-yZ_{n-1}(1,0), \qquad n \geq 3.\label{Znfuneq2}
\end{align}
Multiplying both sides of \eqref{Znfuneq2} by $x^n$, summing over all $n \geq 3$ and solving for $Z(x,1,0)$ yields
\begin{equation}\label{znfuncren}
Z(x,1,0)=\frac{x^2y}{1+xy}+\frac{x(1+y+xy)}{1+xy}Z(x,1,1).
\end{equation}
Formula \eqref{3-12lem2e1} now follows from substituting \eqref{znfuncren} into \eqref{Znfuneq1} and simplifying.
\end{proof}

Though \eqref{3-12lem2e1} does not appear to have an explicit solution for $Z(x,u,v)$, it is still possible to obtain the following formula involving the generating function $Z_m(x,u,v)=[y^m]Z(x,u,v)$ for $m \geq 1$, where $$Z'_m(x,u)=(-x)^{m+1}+xZ_{m-1}(x,1,u)
-\sum_{j=2}^{m-1}(-x)^jZ_{m-j}(x,1,1), \qquad m \geq 2,$$ with $Z_1'(x,u)=x^2$.

\begin{theorem}\label{3-12th1}
Let $m \geq1$ be fixed. The generating function $Z_m(x,u,v)$ enumerating the members of $\mathcal{D}_{3\text{-}12}(n)$ for $n \geq 2m$ having exactly $m$ cycles according to the parameters tracking $n-1-i$ and $n-j$ (marked by $u$ and $v$) is given recursively by
\begin{align}\label{3-12th1e1}
Z_m(x,u,v)&=\frac{1-v}{1-v-xuv}Z'_m(x,u)-\frac{xuv^2}{1-v-xuv}\sum_{j\geq0}\frac{(xuv)^j}{\prod_{i=1}^{j+1}(1-ixuv)}Z'_m\left(x,\frac{uv}{1-(j+1)xuv}\right)\nonumber\\
&\quad+\frac{xu(1-v)}{1-v-xuv}\sum_{j\geq0}\frac{(xu)^j}{\prod_{i=1}^{j+1}(1-ixu)}Z'_m\left(x,\frac{u}{1-(j+1)xu}\right), \qquad m \geq1,
\end{align}
where $i$ and $j$ denote the first and last letters respectively of the final cycle in a member of $\mathcal{D}_{3\text{-}12}(n)$.
\end{theorem}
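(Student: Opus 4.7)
The plan is to extract the coefficient of $y^m$ from both sides of the functional equation \eqref{3-12lem2e1} of Lemma \ref{3-12lem2}, obtaining a relation between $Z_m(x,u,v)$, $Z_m(x,u,1)$ and $Z_m(x,uv,1)$ whose inhomogeneous part is governed by the auxiliary series $Z'_m(x,u)$. Using the geometric expansion $\frac{x^2y}{1+xy}=\sum_{k\geq0}(-1)^kx^{k+2}y^{k+1}$, the first term on the right of \eqref{3-12lem2e1} contributes $(-1)^{m-1}x^{m+1}=(-x)^{m+1}$ at the $[y^m]$ level; the term $xyZ(x,1,u)$ contributes $xZ_{m-1}(x,1,u)$; and $-\frac{x^2y^2}{1+xy}Z(x,1,1)$ contributes $-\sum_{j=2}^{m-1}(-x)^jZ_{m-j}(x,1,1)$, the upper limit being $m-1$ rather than $m$ because $Z_0\equiv0$ (derangements must have at least one cycle). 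The sum of these three pieces is exactly $Z'_m(x,u)$, and after clearing the common denominator $1-v$ the $[y^m]$-projected identity reads
\begin{equation}\label{pleq1}
(1-v-xuv)Z_m(x,u,v)=(1-v)Z'_m(x,u)-xuv^2Z_m(x,uv,1)+xu(1-v)Z_m(x,u,1).
\end{equation}

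Next I would apply the kernel method: the coefficient of $Z_m(x,u,v)$ in \eqref{pleq1} vanishes when $v=1/(1+xu)$. Setting this value, solving for $Z_m(x,u/(1+xu),1)$, and performing the change of variable $w=u/(1+xu)$ (so that $u=w/(1-xw)$ and $1+xu=1/(1-xw)$) yields the self-similar recursion
\[
Z_m(x,w,1)=\frac{1}{1-xw}Z'_m\!\left(x,\frac{w}{1-xw}\right)+\frac{xw}{(1-xw)^2}Z_m\!\left(x,\frac{w}{1-xw},1\right).
\]
The key observation is that the substitution $w\mapsto w/(1-xw)$ iterates in closed form to $w\mapsto w/(1-kxw)$ after $k$ applications, which is exactly the source of the factor $\prod_{i=1}^{j+1}(1-ixu)$ in the target formula.

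Iterating this recursion, a direct induction shows that after $N$ steps one has
\[
Z_m(x,u,1)=\sum_{j=0}^{N-1}\frac{(xu)^j}{\prod_{i=1}^{j+1}(1-ixu)}Z'_m\!\left(x,\frac{u}{1-(j+1)xu}\right)+R_N(x,u),
\]
where $R_N(x,u)=\frac{(xu)^N}{\prod_{i=1}^{N-1}(1-ixu)(1-Nxu)^2}\,Z_m\!\left(x,\frac{u}{1-Nxu},1\right)$. Passing to the limit $N\to\infty$ in the formal power series ring annihilates $R_N$ and produces a closed sum for $Z_m(x,u,1)$. Plugging this expression, together with its companion obtained by replacing $u$ with $uv$ (to treat the $Z_m(x,uv,1)$ term in \eqref{pleq1}), back into \eqref{pleq1} and dividing through by $1-v-xuv$ produces the formula \eqref{3-12th1e1} stated in the theorem.

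The main obstacle I expect is justifying rigorously that $R_N\to0$ as $N\to\infty$ in the formal sense, since each substitution $u\mapsto u/(1-kxu)$ expands into an infinite series in $x$. The argument must therefore be organized coefficient by coefficient: for any fixed monomial $x^nu^k$ only finitely many terms of the infinite sum on the right contribute, and the prefactor $(xu)^N$ in $R_N$ eventually lifts its contribution beyond bidegree $(n,k)$. A secondary bookkeeping task is verifying the precise upper summation limit $m-1$ (rather than $m$) in the formula defining $Z'_m$, which rests on the vanishing $Z_0\equiv0$.
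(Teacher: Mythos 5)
Your proposal is correct and follows essentially the same route as the paper: extract $[y^m]$ from \eqref{3-12lem2e1} (with the same bookkeeping for $(-x)^{m+1}$, $xZ_{m-1}(x,1,u)$ and the truncated sum due to $Z_0\equiv0$), kill the kernel $1-v-xuv$, iterate the resulting self-similar recursion for $Z_m(x,\cdot,1)$ using $u\mapsto u/(1-kxu)$, and substitute back. The only cosmetic difference is that you locate the kernel root directly at $v=1/(1+xu)$ and change variables afterward, whereas the paper first replaces $v$ by $v/u$ and then sets $u=v/(1-xv)$; both yield the identical recursion \eqref{3-12th1e3}, and your coefficient-by-coefficient justification that the remainder $R_N$ vanishes is a slightly more careful version of the paper's ``assume $|x|<1$'' remark.
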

\begin{proof}
Extracting the coefficient of $y^m$ on both sides of \eqref{3-12lem2e1}, we have
\begin{align*}
\left(1-\frac{xuv}{1-v}\right)Z_m(x,u,v)&=(-x)^{m+1}
-\frac{xuv^2}{1-v}Z_m(x,uv,1)+xuZ_m(x,u,1)+xZ_{m-1}(x,1,u)\\
&\quad-\sum_{j=2}^{m-1}(-x)^jZ_{m-j}(x,1,1),
\end{align*}
which implies
\begin{align}\label{3-12th1e2}
\left(1-\frac{xuv}{u-v}\right)Z_m(x,u,v/u)&=-\frac{xv^2}{u-v}Z_m(x,v,1)+xuZ_m(x,u,1)+Z'_m(x,u), \qquad m \geq1,
\end{align}
where $Z'_m(x,u)$ is as stated, upon replacing $v$ with $v/u$.  Applying the kernel method, and taking $u=\frac{v}{1-xv}$ in \eqref{3-12th1e2}, we obtain
\begin{equation}\label{3-12th1e3}
Z_m(x,v,1)=\frac{xv}{(1-xv)^2}
Z_m\left(x,\frac{v}{1-xv},1\right)+\frac{1}{1-xv}Z'_m\left(x,\frac{v}{1-xv}\right).
\end{equation}
Iteration of \eqref{3-12th1e3}, where we may assume $|x|<1$, leads to
$$Z_m(x,v,1)=\sum_{j\geq0}\frac{x^jv^j}{\prod_{i=1}^{j+1}(1-ixv)}Z'_m\left(x,\frac{v}{1-(j+1)xv}\right),$$
from which \eqref{3-12th1e1} follows from \eqref{3-12th1e2}.
\end{proof}

Formula \eqref{3-12th1e1} provides a recursive procedure for finding $Z_m(x,u,v)$ for $m \geq1$, as $Z'_m(x,u)$ depends only upon the generating functions $Z_j(x,v,u)$ for $1 \leq j \leq m-1$. Note that the number of cycles parameter, in addition to the statistics marked by $u$ and $v$, is crucial in finding a procedure for recursively enumerating flattened derangements that avoid 3-12 (and also 3-21 below).

\subsection{The case 3-21}\label{s3-21}

Let $r_{n,i,j}$ be defined in analogy with $z_{n,i,j}$ above, but in conjunction with the pattern 3-21.  To aid in finding a recursion for $r_{n,i,j}$, let $s_{n,i,j}$ denote the restriction of $r_{n,i,j}$ to those members in which the final cycle has length at least three.  For example, when $n=4$, we have $r_{4,1,2}=y$, $r_{4,1,3}=r_{4,1,4}=2y$, $r_{4,2,3}=r_{4,2,4}=r_{4,3,4}=y^2$, with $s_{4,1,j}=r_{4,1,j}$ for $2 \leq j \leq 4$ and $s_{4,i,j}=0$ if $i=2,3$.

The arrays $r_{n,i,j}$ and $s_{n,i,j}$ satisfy the following system of intertwined recurrences, where $[S]$ equals
one or zero depending upon the truth or falsehood of the statement $S$.

\begin{lemma}\label{3-21lem1}
If $n \geq 3$, then
\begin{equation}\label{rn/snrec1}
r_{n,i,j}=[j<n]r_{n-1,i,n-1}+\sum_{k=i+1}^{j-1}r_{n-1,i,k}+y\sum_{\ell=1}^{i-1}s_{n-1,\ell,i}, \qquad 1 \leq i <j \leq n,
\end{equation}
and
\begin{equation}\label{rn/snrec2}
s_{n,i,j}=r_{n,i,j}-y\sum_{\ell=1}^{i-1}s_{n-1,\ell,i}, \qquad 1 \leq i < j \leq n,
\end{equation}
with $r_{2,1,2}=y$ and $s_{2,1,2}=0$.
\end{lemma}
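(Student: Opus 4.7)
The plan is to establish the first recurrence by a bijective case analysis on the length of the final cycle of $\pi \in \mathcal{R}_{n,i,j}$ (the set counted by $r_{n,i,j}$), after which the second recurrence follows immediately. The proof hinges on the following structural fact for the flattened form $\pi'=\pi_1\cdots\pi_n$: avoidance of $3\text{-}21$ is equivalent to requiring that at every descent position $b$ (where $\pi_b>\pi_{b+1}$) the entry $\pi_b$ is a left-to-right maximum of $\pi'$. In particular, since $n$ is the global maximum, the letters appearing after $n$ in $\pi'$ form an increasing sequence. Note also that $\pi_{n-1}$ always lies in the final cycle, as the final cycle of length $L$ occupies the last $L \geq 2$ positions of $\pi'$.

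In Case~A the final cycle $(i,c_1,\ldots,c_{L-2},j)$ has length $L\geq 3$, so $\pi_{n-1}=c_{L-2}$; the structural fact forces the dichotomy $c_{L-2}<j$ (no descent at the last two positions) or $c_{L-2}=n$ (a descent witnessed by the global maximum, which additionally forces $j<n$). I would then delete $j$ from the final cycle and order-preservingly relabel $[n]\setminus\{j\}\to[n-1]$ by reducing every letter above $j$ by one. The image is a derangement $\widetilde\pi\in\mathcal{D}_{3\text{-}21}(n-1)$ whose final cycle starts at $i$ and ends at $c_{L-2}\in[i+1,j-1]$ in the first subcase, or at $n-1$ (the relabeled $n$) in the second. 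The inverse appends $j$ at the end of the final cycle and undoes the relabeling; $3\text{-}21$ avoidance is preserved both ways because prefix maxima at positions $<n$ are unaffected by appending $j$, and the only new potential descent is at positions $n-1,n$, which is either absent (when $c_{L-2}<j$) or occurs at the global-max position (when $c_{L-2}=n$). This accounts for $\sum_{k=i+1}^{j-1}r_{n-1,i,k}+[j<n]\,r_{n-1,i,n-1}$.

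In Case~B the final cycle is $(i,j)$ of length two, which requires $i\geq 2$ and hence the existence of a penultimate cycle, say $(\ell,c_1,\ldots,c_s)$ with $\ell<i$ by standard-form ordering. I would remove the final cycle and splice $i$ onto the end of the penultimate, producing the new final cycle $(\ell,c_1,\ldots,c_s,i)$ of length $s+2\geq 3$; after relabeling $[n]\setminus\{j\}\to[n-1]$ the image lies in $\mathcal{S}_{n-1,\ell,i}$, and the loss of one cycle contributes the factor $y$. The flattened form of $\pi$ differs from that of the image only by the appended letter $j$, and $i<j$ ensures no new descent at the junction, so $3\text{-}21$ avoidance is preserved. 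Summing over $\ell\in[1,i-1]$ gives $y\sum_{\ell=1}^{i-1}s_{n-1,\ell,i}$, completing the first recurrence. The second recurrence is then immediate, since by definition $s_{n,i,j}$ restricts $r_{n,i,j}$ to final cycles of length at least three, and the Case~B contribution enumerates exactly the complementary length-two members. The main obstacle I anticipate is the Case~A dichotomy: carefully ruling out the intermediate possibility $c_{L-2}\in(j,n)$ by invoking the prefix-maximum condition (together with the fact that $n$ must appear among $\pi_1,\ldots,\pi_{n-1}$ when $j<n$) is the subtle step, while the reversibility checks and tracking of the relabeling in both cases are routine.
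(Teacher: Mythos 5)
Your proposal is correct and follows essentially the same route as the paper: delete the final letter $j$ of the flattened form, use the $3$-$21$ avoidance condition to pin the penultimate letter to $[i+1,j-1]\cup\{n\}$ (yielding the $\sum_{k}r_{n-1,i,k}$ and $[j<n]r_{n-1,i,n-1}$ terms), and handle the length-two final cycle by splicing $i$ onto the penultimate cycle, which is exactly why the $s_{n-1,\ell,i}$ terms appear. The only differences are organizational (you split by cycle length first, the paper by $j=n$ versus $j<n$) and your explicit reformulation of avoidance via descent tops being left-to-right maxima, which the paper leaves implicit.
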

\begin{proof}
The initial conditions when $n=2$ are clear, so assume $n \geq 3$.  Let $\mathcal{R}_{n,i,j}$ and $\mathcal{S}_{n,i,j}$ denote the subsets of $\mathcal{D}_{3\text{-}21}(n)$ whose members are enumerated by $r_{n,i,j}$ and $s_{n,i,j}$, respectively.  We first show the $j=n$ case of \eqref{rn/snrec1}, that is,
\begin{equation}\label{rn/snrec3}
r_{n,i,n}=\sum_{k=i+1}^{n-1}r_{n-1,i,k}+y\sum_{\ell=1}^{i-1}s_{n-1,\ell,i}, \qquad 1 \leq i \leq n-1.
\end{equation}
Let $\pi \in \mathcal{R}_{n,i,n}$, where $n \geq 3$ and $1 \leq i \leq n-1$.  If the final cycle of $\pi$ has length at least three, then deletion of $n$ implies that one gets a contribution of $\sum_{k=i+1}^{n-1}r_{n-1,i,k}$ towards $r_{n,i,n}$, upon considering the penultimate letter $k \in [i+1,n-1]$ of $\pi'$.  Otherwise, the final cycle of $\pi$ is the transposition $(in)$, in which case, we delete $n$ and add $i$ as the last letter to the penultimate cycle of $\pi$ (which exists as $n\geq 3$ implies $i \geq 2$ in this case).  This is seen to yield each member of $\cup_{\ell=1}^{i-1}\mathcal{S}_{n-1,\ell,i}$ in a unique manner, and hence we obtain a contribution of $y\sum_{\ell=1}^{i-1}s_{n-1,\ell,i}$, upon considering the first letter $\ell$ in the penultimate cycle of $\pi$.  Note that the extra factor of $y$ accounts for the deleted cycle $(in)$.  Combining this case with the prior implies \eqref{rn/snrec3}.

Now assume $\pi \in \mathcal{R}_{n,i,j}$, where $1 \leq i <j \leq n-1$. Let $k$ denote the penultimate letter of $\pi'$.  Note $k \in [i,j-1]\cup\{n\}$ in order to avoid 3-21.  If $k=n$, then deletion of $j$ results in a member of $\mathcal{R}_{n-1,i,n-1}$, as the final cycle of $\pi$ must contain at least three letters in this case, and hence we obtain a contribution of $r_{n-1,i,n-1}$ towards the weight. If $k \in [i+1,j-1]$, then deletion of $j$ yields $\sum_{k=i+1}^{j-1}r_{n-1,i,k}$, as again the final cycle must have length at least three.  If $k=i$, then the terminal cycle of $\pi$ is $(ij)$, and proceeding as in the comparable case above in the proof of \eqref{rn/snrec3} yields a contribution of $y\sum_{\ell=1}^{i-1}s_{n-1,\ell,i}$ towards $r_{n,i,j}$.  Combining each of the prior cases regarding $k$ then implies the $j<n$ case of \eqref{rn/snrec1} and finishes the proof of \eqref{rn/snrec1}.  Finally, in both of the preceding arguments wherein $j=n$ or $j<n$, respectively, it is seen that the weight of all members of $\mathcal{R}_{n,i,j}$ in which the final cycle has length two equals $y\sum_{\ell=1}^{i-1}s_{n-1,\ell,i}$.  Subtracting this quantity from $r_{n,i,j}$ then gives an expression for $s_{n,i,j}$, by the definitions, which implies \eqref{rn/snrec2} and completes the proof.
\end{proof}

Let $R_n(u,v)=\sum_{i=1}^{n-1}\sum_{j=i+1}^{n}r_{n,i,j}u^{n-1-i}v^{n-j}$ and $S_n(u,v)=\sum_{i=1}^{n-1}\sum_{j=i+1}^{n}s_{n,i,j}u^{n-1-i}v^{n-j}$ for $n \geq 2$.  Define the generating functions $R(x,u,v)=\sum_{n\geq 2}R_n(u,v)x^n$ and $S(x,u,v)=\sum_{n\geq 2}S_n(u,v)x^n$.

\begin{lemma}\label{3-21lem2}
We have
\begin{align}
\left(1+\frac{xuv}{1-v}\right)R(x,u,v)&=x^2y(1+xuv)+\frac{xu(1+xuv)}{1-v}R(x,u,1)-\frac{x^2u^2v^3}{1-v}R(x,uv,1)\notag\\
&\quad+\frac{xy(1+xuv)}{1-v}(S(x,1,u)-vS(x,1,uv))\label{3-21lem2e1}
\end{align}
and
\begin{equation}\label{3-21lem2e2}
S(x,u,v)=R(x,u,v)-x^2y-\frac{xy}{1-v}(S(x,1,u)-vS(x,1,uv)).
\end{equation}
\end{lemma}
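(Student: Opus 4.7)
The plan is to translate each recurrence in Lemma \ref{3-21lem1} into a functional equation by multiplying through by $u^{n-1-i}v^{n-j}x^n$ and summing over the admissible indices with $n \ge 3$. On the right-hand side of \eqref{rn/snrec1}, each of the three summands contributes cleanly after interchanging orders of summation.

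Handling the middle summand first: switching to sum over $k$ before $j$, the inner geometric series $\sum_{j=k+1}^{n} v^{n-j} = (1 - v^{n-k})/(1-v)$ yields the contribution $\frac{xu}{1-v}[R(x,u,1) - v R(x,u,v)]$. An analogous manipulation of the final summand, swapping the orders on $\ell$ and $i$, gives $\frac{xy}{1-v}[S(x,1,u) - v S(x,1,uv)]$. The delicate term is the first summand $[j<n]\,r_{n-1,i,n-1}$: evaluating it produces terms involving the $v=0$ specialization $R_n(u,0) = \sum_{i} r_{n,i,n} u^{n-1-i}$ (and its $u \to uv$ analogue), which are not among the generating functions appearing in \eqref{3-21lem2e1}. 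To eliminate them, I would specialize the entire three-part recurrence at $v=0$: since the first part vanishes on account of its overall factor of $v$ and the middle and last parts collapse to $u R_{n-1}(u,1)$ and $y S_{n-1}(1,u)$ respectively in that limit, this produces the auxiliary identity $R(x,u,0) = x^2 y + xu R(x,u,1) + xy S(x,1,u)$, using the initial condition $R_2(u,v) = y$. The same identity then holds with $u$ replaced by $uv$ throughout.

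Substituting these two expressions back into the summed first part and collecting all $R(x,u,v)$ terms on the left should yield \eqref{3-21lem2e1}; the repeated factor $(1+xuv)$ on the right arises from merging the linear-in-$x$ contribution of $R(x,u,1)$ coming from the middle part with the quadratic-in-$x$ contribution coming from the first part after substitution, and likewise for the $S$-terms. Equation \eqref{3-21lem2e2} follows with much less effort: multiplying \eqref{rn/snrec2} by $u^{n-1-i}v^{n-j}x^n$ and summing over $n \ge 3$, one recognizes the inner double sum as exactly the quantity already computed above for the final summand of \eqref{rn/snrec1}, and the initial conditions $r_{2,1,2}=y$, $s_{2,1,2}=0$ supply the $-x^2y$ term. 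The main obstacle is purely bookkeeping at the boundaries $i=n-1$ and $j=n$, where the indicator $[j<n]$ in \eqref{rn/snrec1} is doing all the work; correctly tracking these endpoints is what allows the $v=0$ specialization to close the system and to produce the two-equation statement without any residual $R(x,u,0)$ terms.
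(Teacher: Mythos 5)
Your proposal is correct and follows essentially the same route as the paper: translate the two recurrences of Lemma \ref{3-21lem1} into functional equations, then eliminate the residual $R(x,u,0)$ and $R(x,uv,0)$ terms via the auxiliary identity $R(x,u,0)=x^2y+xuR(x,u,1)+xyS(x,1,u)$ before substituting back. The only cosmetic difference is that the paper obtains this auxiliary identity by reworking the $j=n$ case \eqref{rn/snrec3} of the combinatorial recurrence directly, whereas you get the same relation by setting $v=0$ in the already-derived recurrence for $R_n(u,v)$; both are valid and lead to the identical computation.
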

\begin{proof}
Using \eqref{rn/snrec1} and \eqref{rn/snrec2}, one can show for $n \geq 3$,
\begin{align}
R_n(u,v)&=\frac{uv}{1-v}(R_{n-1}(u,0)-vR_{n-1}(uv,0))+\frac{u}{1-v}(R_{n-1}(u,1)-vR_{n-1}(u,v))\notag\\
&\quad+\frac{y}{1-v}(S_{n-1}(1,u)-vS_{n-1}(1,uv))\label{3-21lem2e3}
\end{align}
and
\begin{equation}\label{3-21lem2e4}
S_n(u,v)=R_n(u,v)-\frac{y}{1-v}(S_{n-1}(1,u)-vS_{n-1}(1,uv)),
\end{equation}
with $R_2(u,v)=y$ and $S_2(u,v)=0$.  Note that \eqref{3-21lem2e4} implies \eqref{3-21lem2e2}, whereas from \eqref{3-21lem2e3}, we get
\begin{align}
\left(1+\frac{xuv}{1-v}\right)R(x,u,v)&=x^2y+\frac{xu}{1-v}R(x,u,1)+\frac{xuv}{1-v}(R(x,u,0)-vR(x,uv,0))\notag\\
&\quad+\frac{xy}{1-v}(S(x,1,u)-vS(x,1,uv)).\label{3-21lem2e5}
\end{align}

To eliminate the terms in \eqref{3-21lem2e5} where $v=0$, first note that by \eqref{rn/snrec3}, we have
\begin{align*}
R_n(u,0)&=\sum_{i=1}^{n-1}r_{n,i,n}u^{n-1-i}=\sum_{i=1}^{n-2}u^{n-1-i}\sum_{k=i+1}^{n-1}r_{n-1,i,k}+y\sum_{i=2}^{n-1}u^{n-1-i}\sum_{\ell=1}^{i-1}s_{n-1,\ell,i}\\
&=\sum_{i=1}^{n-2}\sum_{k=i+1}^{n-1}r_{n-1,i,k}u^{n-1-i}+y\sum_{\ell=1}^{n-2}\sum_{i=\ell+1}^{n-1}s_{n-1,\ell,i}u^{n-1-i}\\
&=uR_{n-1}(u,1)+yS_{n-1}(1,u), \qquad n \geq 3,
\end{align*}
which implies
\begin{equation}\label{3-21lem2e6}
R(x,u,0)=x^2y+xuR(x,u,1)+xyS(x,1,u).
\end{equation}
Substituting \eqref{3-21lem2e6} into \eqref{3-21lem2e5}, and simplifying, yields \eqref{3-21lem2e1}.
\end{proof}

Let $R_m(x,u,v)=[y^m]R(x,u,v)$ and $S_m(x,u,v)=[y^m]S(x,u,v)$ for $m \geq 1$, with $R_0(x,u,v)=S_0(x,u,v)=0$.  Though it is apparently not possible to solve the system of functional equations in Lemma \ref{3-21lem2} explicitly, we have the following recursive formulas for $R_m(x,u,v)$ and $S_m(x,u,v)$.

\begin{theorem}\label{3-21th1}
Let $m \geq1$ be fixed. The generating function $R_m(x,u,v)$ enumerating the members of $\mathcal{D}_{3\text{-}21}(n)$ for $n \geq 2m$ having exactly $m$ cycles according to the parameters tracking $n-1-i$ and $n-j$ (marked by $u$ and $v$) is given recursively by
\begin{align}
R_m(x,u,v)&=\frac{x^2(1-v)(1+xuv)}{1-v+xuv}\cdot\delta_{m,1}+\frac{x(1+xuv)}{1-v+xuv}(S_{m-1}(x,1,u)-vS_{m-1}(x,1,uv))\notag\\ &\quad+\frac{x(1+xuv)}{1-v+xuv}\sum_{j\geq0}\frac{(xu)^j(1-jxu)}{\prod_{i=1}^{j+1}(1-ixu)}R'_m\left(x,\frac{u}{1-jxu}\right)\notag\\
&\quad-\frac{x^2uv^2}{1-v+xuv}\sum_{j\geq0}\frac{(xuv)^j(1-jxuv)}{\prod_{i=1}^{j+1}(1-ixuv)}R'_m\left(x,\frac{uv}{1-jxuv}\right),\qquad m \geq1, \label{3-21th1e1}
\end{align}
where $i$ and $j$ denote the first and last letters respectively of the final cycle in a member of $\mathcal{D}_{3\text{-}21}(n)$ and
$$R'_m(x,u)=x^2u\cdot\delta_{m,1}-(1-xu)S_{m-1}(x,1,u)+S_{m-1}\left(x,1,\frac{u}{1-xu}\right).$$
Furthermore, we have $S_m(x,u,v)=R_m(x,u,v)-x^2\cdot\delta_{m,1}-\frac{x}{1-v}(S_{m-1}(x,1,u)-vS_{m-1}(x,1,uv))$,
where $S_m(x,u,v)$ is the restriction of $R_m(x,u,v)$ to derangements whose final cycle is of length at least three.
\end{theorem}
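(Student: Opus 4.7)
The approach parallels the proof of Theorem \ref{3-12th1}, adapted to the coupled system in Lemma \ref{3-21lem2}. First, extract the coefficient of $y^m$ from both equations of the lemma. Relation \eqref{3-21lem2e2} immediately yields the promised expression for $S_m(x,u,v)$ in terms of $R_m(x,u,v)$ and $S_{m-1}$. Relation \eqref{3-21lem2e1} becomes a functional equation linking $R_m(x,u,v)$, $R_m(x,u,1)$, and $R_m(x,uv,1)$ together with $S_{m-1}(x,1,u)$ and $S_{m-1}(x,1,uv)$, with an inhomogeneous $x^2(1+xuv)\delta_{m,1}$ term arising from the base case $R_2 = y$.

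Second, determine $R_m(x,u,1)$ by the kernel method. After multiplying the equation through by $1-v$, the left-hand side coefficient becomes $1-v+xuv = 1-v(1-xu)$, which vanishes precisely at $v = 1/(1-xu)$. Plugging in this value kills the $R_m(x,u,v)$ term, and, after clearing the common factor of $x/(1-xu)$ from the remaining terms, one obtains
$$u(1-xu)R_m(x,u,1) = \frac{xu^2}{1-xu}\,R_m\!\left(x,\frac{u}{1-xu},1\right) + R'_m(x,u),$$
with $R'_m(x,u)$ exactly as defined in the theorem statement. Since iterating the substitution $u \mapsto u/(1-xu)$ a total of $j$ times yields $u/(1-jxu)$, unrolling this first-order recurrence (justified by working with formal power series in $x$ and checking $|x|$ small) produces
$$u\,R_m(x,u,1) = \sum_{j\geq 0}\frac{(xu)^j(1-jxu)}{\prod_{i=1}^{j+1}(1-ixu)}\,R'_m\!\left(x,\frac{u}{1-jxu}\right),$$
and replacing $u$ by $uv$ gives the analogous series for $uv\,R_m(x,uv,1)$.

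Third, substitute both series back into the $[y^m]$ version of \eqref{3-21lem2e1} and divide through by the kernel factor $(1-v+xuv)/(1-v)$. The four resulting terms correspond line-by-line to the four pieces of the claimed identity \eqref{3-21th1e1}: the inhomogeneous $\delta_{m,1}$ contribution, the $S_{m-1}$ source, the series arising from back-substitution of $R_m(x,u,1)$, and the series from $R_m(x,uv,1)$. The companion expression for $S_m(x,u,v)$ was already obtained in step one.

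The main obstacle is purely algebraic bookkeeping, namely verifying that the iteration in step two produces precisely the coefficients $(xu)^j(1-jxu)/\prod_{i=1}^{j+1}(1-ixu)$. The cleanest route is induction on $j$, exploiting the telescoping identity $1 - x\cdot u/(1-jxu) = (1-(j+1)xu)/(1-jxu)$ so that successive factors collapse to the compact form displayed; one must also carefully track how the $\delta_{m,1}$ term interacts with the $S_{m-1}$ tails inside $R'_m$.
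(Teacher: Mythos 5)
Your proposal is correct and follows essentially the same route as the paper: extract $[y^m]$ from the system in Lemma \ref{3-21lem2}, apply the kernel method with the root $v=1/(1-xu)$ (the paper reaches the same root after first replacing $v$ by $v/u$, a purely cosmetic change) to obtain the first-order recurrence for $uR_m(x,u,1)$ with the stated $R'_m$, iterate it using the telescoping of $u\mapsto u/(1-xu)$, and back-substitute. Your displayed kernel relation and iterated series agree exactly with the paper's equations \eqref{3-21th1e3} and \eqref{3-21th1e4} after multiplying through by $u(1-xu)$ and $u$, respectively.
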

\begin{proof}
The second statement follows from extracting coefficients of $y^m$ on both sides of \eqref{3-21lem2e2}, so we focus on establishing \eqref{3-21th1e1}.  Comparing coefficients of $y^m$ in \eqref{3-21lem2e1}, and replacing $v$ with $v/u$, gives
\begin{align}
\left(1+\frac{xuv}{u-v}\right)R_m(x,u,v/u)&=x^2(1+xv)\cdot\delta_{m,1}+\frac{xu^2(1+xv)}{u-v}R_m(x,u,1)-\frac{x^2v^3}{u-v}R_m(x,v,1)\notag\\
&\quad+\frac{xu(1+xv)}{u-v}(S_{m-1}(x,1,u)-(v/u)S_{m-1}(x,1,v)).\label{3-21th1e2}
\end{align}
Letting $v=\frac{u}{1-xu}$ in \eqref{3-21th1e2} implies
\begin{equation}\label{3-21th1e3}
R_m(x,u,1)=\frac{xu}{(1-xu)^2}R_m\left(x,\frac{u}{1-xu},1\right)+\frac{1}{u(1-xu)}R'_m(x,u),
\end{equation}
where $R'_m(x,u)$ is as stated.  Iteration of \eqref{3-21th1e3} leads to
\begin{equation}\label{3-21th1e4}
R_m(x,u,1)=\sum_{j\geq0}\frac{x^ju^{j-1}(1-jxu)}{\prod_{i=1}^{j+1}(1-ixu)}R'_m\left(x,\frac{u}{1-jxu}\right), \qquad m \geq 1.
\end{equation}
Substituting \eqref{3-21th1e4} into \eqref{3-21th1e2} implies \eqref{3-21th1e1} and completes the proof.
\end{proof}

\noindent{\bf Remark:} Note that when $m=1$, we have $Z'_1(x,u)=x^2$ and $R'_1(x,u)=x^2u$.  Thus, taking $m=1$ in Theorems \ref{3-12th1} and \ref{3-21th1} gives
$$Z_1(x,u,v)=\frac{x^2}{1-v-xuv}((1-v)Bell(xu)-vBell(xuv)+v)$$
and
$$R_1(x,u,v)=\frac{x^2}{1-v+xuv}((1+xuv)Bell(xu)-xuv^2Bell(xuv)-v),$$
where $Bell(x)=\sum_{j\geq0}\frac{x^j}{\prod_{i=1}^j(1-ix)}$ is the (ordinary) generating function of the Bell number sequence $B_n$ for $n\geq0$.  Let $\mathcal{S}_n(\tau)$ denote the set of permutations of length $n$ that avoid the pattern $\tau$ in the usual sense.  Note that members of $\mathcal{D}_{3\text{-}12}(n)$ or $\mathcal{D}_{3\text{-}21}(n)$  containing one cycle are synonymous with the members of $\mathcal{S}_{n-1}(3\text{-}12)$ or $\mathcal{S}_{n-1}(3\text{-}21)$.  Thus, $Z_1(x,1,v)$ and $R_1(x,1,v)$ are seen to give respectively the generating functions of the distributions on $\mathcal{S}_{n-1}(3\text{-}12)$ and $\mathcal{S}_{n-1}(3\text{-}21)$ for $n \geq 2$ for the parameter which tracks $n-1-j$, where $j\in[n-1]$ denotes the final letter of a permutation belonging to either avoidance class.  Finally, taking $u=v=1$ implies
$Z_1(x,1,1)=R_1(x,1,1)=x(Bell(x)-1)$, which confirms the well-known fact that $|\mathcal{S}_{n}(3\text{-}12)|=|\mathcal{S}_{n}(3\text{-}21)|=B_n$ for all $n \geq 1$.


\end{document}